\def\thm@space@setup{%
\thm@preskip=1em \thm@postskip=0pt
}
\newtheorem{proposition}{Proposition}
\newtheorem{lemma}{Lemma}
\newtheorem{theorem}{Theorem}
\acrodef{qop}[QOP]{Quadratic Optimization Problem}
\acrodef{socp}[SOCP]{Second-Order Cone Problem}
\acrodef{sdp}[SDO]{Semidefinite Optimization}
\acrodef{mio}[MIO]{Mixed Integer Optimization}
\acrodef{cio}[CIO]{Convex Integer Optimization}
\acrodef{kkt}[KKT]{Karush-Kuhn-Tucker}
\acrodef{amp}[AMP]{Approximate Message-Passing}
\acrodef{tsp}[TSP]{Travelling Salesman Problem}
\acrodef{iid}[i.i.d.]{independent identically distributed}
\acrodef{snr}[SNR]{signal-to-noise ratio}
\tikzstyle{block} = [draw, rectangle, minimum height=3em, minimum width=6em]
\tikzstyle{input} = [coordinate]
\tikzstyle{output} = [coordinate]
\newcommand{\norm}[1]{\left\|#1\right\|}
\newcommand{\abs}[1]{\left|#1\right|}
\newcommand{\one}[0]{\mathbb{1}}
\newcommand{\eye}[1]{\mathbb{I}_{#1}}
\newcommand{\set}[2]{\left\{ #1\ : \ #2 \right\}}
\newcommand{\tpose}{^\top}
\newcommand{\defn}[0]{:=}
\providecommand{\e}[1]{\ensuremath{\cdot 10^{#1}}}
\newcommand{\mc}{\mathcal}
\newcommand{\mb}{\mathbb}
\def\N{\mathrm{N}}
\def\Re{\mathrm{R}}
\renewcommand{\S}{\mathrm{S}}
\def\st{\mathrm{s.t.}}
\DeclareMathOperator{\supp}{supp}
\providecommand{\keywords}[1]{\textbf{\textit{Keywords: }} #1}
\title{Sparse Hierarchical Regression with Polynomials}
\author[1]{Dimitris Bertsimas\thanks{\href{mailto:dbertsim@mit.edu}{dbertsim@mit.edu}}}
\author[1]{Bart \mbox{Van Parys}\thanks{\href{mailto:vanparys@mit.edu}{vanparys@mit.edu}}}
\affil[1]{Operations Research Center, Massachusetts Institute of Technology}
\date{}
\begin{document}

\maketitle

\begin{abstract}
  We present a novel method for exact hierarchical sparse polynomial regression. Our regressor is that degree $r$ polynomial which depends on at most $k$ inputs, counting at most $\ell$ monomial terms, which minimizes the sum of the squares of its prediction errors. The previous hierarchical sparse specification aligns well with modern big data settings where many inputs are not relevant for prediction purposes and the functional complexity of the regressor needs to be controlled as to avoid overfitting. We present a two-step approach to this hierarchical sparse regression problem. First, we discard  irrelevant inputs using an extremely fast input ranking heuristic. Secondly, we take advantage of modern cutting plane methods for integer optimization to solve our resulting reduced hierarchical $(k, \ell)$-sparse problem exactly. The ability of our method to identify all $k$ relevant inputs and all $\ell$ monomial terms is shown empirically to experience a phase transition. Crucially, the same transition also presents itself in our ability to reject all irrelevant features and monomials as well. In the regime where our method is statistically powerful, its computational complexity is interestingly on par with \texttt{Lasso} based heuristics. The presented work fills a void in terms of a lack of powerful disciplined nonlinear sparse regression methods in high-dimensional settings. Our method is shown empirically to scale to regression problems with $n\approx 10,000$ observations for input dimension $p\approx 1,000$.
  
\end{abstract}

\keywords{Nonlinear Regression, Sparse Regression, Integer Optimization, Polynomial Learning}

\section{Introduction}
\label{sec:introduction}

We consider the problem of high-dimensional nonlinear regression. Given input $X=(x_1, \dots, x_n) \in \Re^{n\times p}$ and response data $Y = (y_1, \dots, y_n) \in \Re^n$, we set out to find an unknown underlying nonlinear relationship 
\begin{equation*}
  y_t=g(x_t) + e_t, \quad \forall t \in [n],
\end{equation*}
where $E\defn(e_1, \dots, e_n)$ in $\Re^n$ is the error term. High-dimensional regression is a problem at the core of  machine learning, statistics and signal processing. It is clear that if we are to carry any hope of success, some structure on the nature of the unknown nonlinear relationship $g$ between input and response data must be assumed. Classical statistical learning theory \citep{vapnik2013nature} indeed requires the complexity of the set of considered functions to be bounded in some way.
Polynomial regression has a long history \citep{smith1918standard} and is mentioned in almost any standard work on machine learning. We will consider in this paper all nonlinear relationships in the form of polynomials in $p$ variables of total degree at most $r$. We denote with $\mc P$ the set of polynomials of total degree $r$. Typically the polynomial which best explains data is defined as the minimizer to the abstract optimization problem
\begin{equation}
  \label{eq:abstract_regression}
  \min_{g \in \mc P} ~\small\frac{1}{2} \sum_{t\in[n]} \norm{y_t - g(x_t)}^2 + \frac{1}{2\gamma}\norm{g}^2,
\end{equation}
over polynomial functions $g$ in $\mc P$. The squared norm $\norm{g}^2$ of a polynomial $g$ is taken here to mean the sum of squares of its coefficients. The best polynomial in formulation \eqref{eq:abstract_regression} minimizes a weighted combination of the sum of its squared prediction errors and its coefficient vector. This latter Ridge regularization \citep{tikhonov1943stability, hoerl1970ridge} term stabilizes its solution and avoids overfitting. An alternative interpretation of the regularization term as a precaution against errors in the input data matrix $X$ has been given for instance in  \citep{bertsimas2017characterization}. Nevertheless, the value of the hyperparameter $\gamma$ must in practice be estimated based on historical data using for instance cross validation. 

By allowing for nonlinear dependence between input and response data polynomial regression can discover far more complex relationships than standard linear regression. For a sufficiently large degree $r$ in fact, any continuous functional dependence can be discovered up to arbitrary precision \citep{stone1948generalized}. The previous observation leads to the fact that polynomial regression is a hybrid between parametric and nonparametric regression. Depending on the degree $r$ of the polynomials considered, it falls between textbook parametric regression $(r=1)$ which assumes the functional dependence $g$ between input and response data to be linear and completely nonparametric regression ($r\to \infty$) where nothing beyond continuity of the dependence $g$ between input and response data is assumed. Although nonparametric approaches are very general and can unveil potentially any continuous relationship between input and response data, they nonetheless seriously lack in statistical power. Indeed, nonparametric methods such as kernel density estimation \citep{turlach1993bandwidth} need a huge number of samples in order to return statistically meaningful predictions. This curse of dimensionality is especially harmful in high-dimensional settings $p \gg n$ commonly found in modern data sets.

The polynomial regressor $g$ in the regression problem \eqref{eq:abstract_regression} is a sum of at most $f \defn \binom{p+r}{r}$ monomial features. A seminal result due to \citet{vapnik1998support} states that the high-dimensionality of the unconstrained regression problem \eqref{eq:abstract_regression} does not pose an obstacle to its numerical solution. Indeed, the feature dimensionality $f$ can be avoided in its entirety using the now classical kernel representation of polynomials put forward by \citet{mercer1909functions}. Regression formulations amendable to such a kernel reformulation are typically referred to a kernel learning methods. It is thanks to both the flexibility of the regression formulation \eqref{eq:abstract_regression} and its computational tractability that polynomial and even more general kernel methods have experienced a lot of interest in the learning community \citep{suykens1999least}. Polynomial regression using kernel learning has indeed been used with success in many applications such as character recognition, speech analysis, image analysis, clinical diagnostics, person identification, machine diagnostics, and industrial process supervision. Today, efficient and mature software implementations of these so called polynomial kernel regressors are widely available, see c.f.\ \citep{scholkopf2002learning, pelckmans2002ls}. Unfortunately in high-dimensional settings $(f\gg n)$, the previously discussed curse of dimensionality and overfitting phenomena do pose a formidable obstacle to the recovery of the correct nonlinear relationship between input and response data. That is, in settings where we have many more monomial input features $f$ than observations $n$, it becomes unlikely that we recover a statistically meaningful regressor by solving \eqref{eq:abstract_regression}. 

Here we will work to address the previous issue by providing a sparse counterpart to the polynomial regression problem \eqref{eq:abstract_regression}. Sparse regression has recently been identified in the works of \citet{hastie2015statistical, candes2006robust} as an excellent antidote to the malignant phenomena of both dimensionality and overfitting. Interestingly, very few mature machine learning methods seem to have been developed which can deal reliably with sparse nonlinear regressors in a high-dimensional settings despite the obvious relevance of this problem class. One notable exception of direct relevance here is the \texttt{SPORE} algorithm by \cite{huang2010predicting} which uses an approach based on $\ell_1$-regularization. We subsequently describe a hierarchical sparse regression problem which controls both the dependence and functional complexity of the regression polynomials considered.

\subsection{Hierarchical $(k, \ell)$-Sparsity} 

The popularity and effectiveness of sparse regression can from a practical perspective be explained by the following two observations. In the digital age obtaining and processing vast amounts of input data is increasingly less of a burden. Nevertheless, we expect only a small number $k$ of all $p$ recorded inputs to be meaningfully correlated to the response data $Y$. The trouble is that we can not tell the relevant features from the obfuscating bulk of data ahead of time. Sparsity hence firstly describes the limited functional dependence between input and response data. As only a finite amount of data can be recorded, one must avoid overfitting by limiting the complexity of the considered functional relationships. Of the potential $f$ monomials making up the polynomial regressor $g$, the considered relationships should only depend on a select few of them. We use sparsity to enforce simplicity among the considered relationships. As both these described sparsity considerations are quite different in nature, we believe it is paramount not to conflate them. 

We will say that the function $g \in \mc P_{k, \ell}$ is so called $(k, \ell)$-sparse if it is the sum of $\ell$ monomials in at most $k$ inputs. For instance the regressor $g(x) = x_1^2 + x_2 x_3$ would be $(3,2 )$ sparse as it depends on the three inputs $x_1$, $x_2$ and $x_3$, and is made up of two monomials $x_1^2$ and $x_2x_3$. The resulting problem of hierarchical sparse regression can be cast as the regression problem
\begin{equation}
  \label{eq:abstract_sparse_regression}
  \min_{g \in \mc P_{k, \ell}} ~{\frac{1}{2}} \sum_{t\in[n]} \norm{y_t - g(x_t)}^2 + \frac{1}{2\gamma}\norm{g}^2.
\end{equation}
The previous regression formulation is a structured sparsity constrained version of \eqref{eq:abstract_regression}. Using this novel notion of hierarchical sparse regressors, we hope to keep the statistical power of parametric regression while simultaneously allowing highly nonlinear relationships between input and response data as well. Although structured hierarchical sparsity patterns were studied already by \cite{zhao2009composite}, they were never considered in our polynomial regression context directly. A related hierarchical kernel learning approach to a convex proxy of problem \eqref{eq:abstract_sparse_regression} is studied in \cite{bach2009exploring}. The regression problem \eqref{eq:abstract_sparse_regression} carries the additional benefit of automatically yielding highly interpretable regressors with only a few nonlinear terms and input dependencies. By explicitly controlling both the dependence complexity $k$ of used inputs as well as the functional complexity $\ell$ of the regression polynomials, the hierarchical sparse regression problem \eqref{eq:abstract_sparse_regression} promises to deliver nonlinear regressors with significant statistical power even in high-dimensional settings.

Unfortunately, solving the hierarchical sparse regression problem \eqref{eq:abstract_sparse_regression} can be challenging. Bringing to bear the power of modern integer optimization algorithms combined with smart heuristics, we will nevertheless show that many hierarchical sparse regression problems can nevertheless be handled.

\subsection{Exact Scalable Algorithms}

The problem of sparse linear regression has been studied extensively in the literature. Despite being provably hard in the sense of complexity theory's NP hardness, in practice many successful algorithms are available. Historically, the first heuristic methods for sparse approximation seem to have arisen in the signal processing community (c.f.\ the work of \citet{mallat1993matching} and references therein) and typically are of an iterative thresholding type. More recently, one popular class of sparse regression heuristics solve the convex surrogate
\begin{equation}
  \label{eq:abstract_l1_regression}
  \min_{\norm{g}_1 \leq \ell} ~{\frac{1}{2}} \sum_{t\in[n]} \norm{y_t - g(x_t)}^2 + \frac{1}{2\gamma}\norm{g}^2.
\end{equation}
to the sparse regression formulation \eqref{eq:abstract_sparse_regression}. Here the norm $\norm{g}_1$ of the polynomial $g$ is meant to denote the sum of the absolute values of its coefficients. The convex proxy reformulation \eqref{eq:abstract_l1_regression} is a direct adaptation of the seminal \texttt{Lasso} method of \citet{hastie2015statistical} to the polynomial regression problem \eqref{eq:abstract_regression}. The discussed \texttt{SPORE} algorithm by \cite{huang2010predicting} provides an implementation of this idea specific to the polynomial regression context considered here. Where the convex heuristic \eqref{eq:abstract_l1_regression} does not incorporate the hierarchical sparsity structure of our exact formulation \eqref{eq:abstract_sparse_regression}, more refined methods such as \texttt{Group Lasso} \citep{zhao2009composite, bach2008consistency} could in principle do so by considering structured norm constraints. There is an elegant theory for convex proxy schemes promising large improvements over the more myopic iterative thresholding methods. Indeed, a truly impressive amount of high-quality work \citep{buhlmann2011statistics, hastie2015statistical, wainwright2009sharp} has been written on characterizing when exact solutions can be recovered, albeit through making strong probabilistic assumptions on the data. 

The problem of exact sparse nonlinear regression however seems, despite its importance, not to have been studied extensively. Although they are well studied separately, combining nonlinear with sparse regression never received much attention. Our recent work \citep{bertsimas2016sparse} (and earlier in \citep{bertsimas2014statistics}) has revealed that despite complexity results, exact sparse linear regression is not outside the realm of the possible even for very high-dimensional problems with a number of features $f$ and samples $n$ in the 100,000s. Contrary to traditional complexity theory which suggests that the difficulty of a problem increases with size, the sparse regression problems seem to have the property that for a small number of samples $n$, exact regression is not easy to accomplish, but most importantly its solution does not recover the truth. However, for a large number of samples $n$, exact sparse regression can be done extremely fast and perfectly separates the true features in the data from the obfuscating bulk. These  results warrant also the possibility of nonlinear feature discovery in regression tasks.

\subsection{Triage Heuristic}

Unfortunately the dimension of hierarchical $(k, \ell)$-sparse regression problems quickly becomes problematic for all but midsize problem instances. The effective number of regressors $f$ is indeed combinatorial and hence scales quite badly in both the number of regressors $p$ as well of the degree $r$ of the considered polynomials. In order to provide a scalable algorithm to the problem of hierarchical kernel regression it is clear that the dimensionality of the problem needs to be reduced. 

\begin{figure}
  \centering
  \begin{center}
    \includegraphics[width=0.85\textwidth]{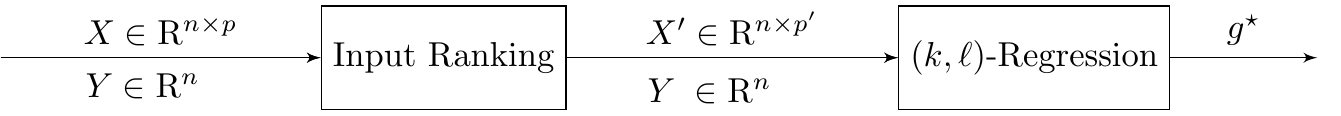}
  \end{center}
  \caption{Our two-step approach to $(k, \ell)$-sparse regression for observations $Y$ in $\Re^n$ and regressors $X$ in $\Re^{n\times p}$ . In a first step we select the $p'\ll p$ most promising inputs out of the $p$ candidates. The second step then performs exact $(k, \ell)$-sparse regression on these most promising candidates. We set out to show that this combination of a smart ranking heuristics and exact sparse regression goes a long way to solve hierarchical sparse regression problems \eqref{eq:abstract_sparse_regression} of practical size.}
  \label{fig:2-step-procedure}
\end{figure}

Our key insight in this paper will be to use polynomial kernel regression to rank the potential inputs. This heuristic method is very helpful in rejecting many irrelevant candidate inputs without missing out on the actual underlying nonlinear relationship between input and response data. Exact sparse hierarchical kernel regression described before will then be used to identify the relevant nonlinearities from among the most promising candidate inputs; see Figure \ref{fig:2-step-procedure}. In this paper we set out to show that a combination of smart heuristics and exact sparse regression goes a long way to solve hierarchical sparse regression problems \eqref{eq:abstract_sparse_regression} of practical size. 

\subsection{Contributions}

In this paper, we first and foremost want to promote a novel notion of hierarchical $(k, \ell)$-sparsity that rhymes well with the challenges of the big data era. Capturing limited functional dependence and complexity in big data problems is crucial to allow statistically meaningful regression in high-dimensional and nonlinear settings. Hierarchical $(k, \ell)$-sparse regression definition is in this regard a first step in the direction of lifting nonlinear regression into high-dimensional settings as well. In particular, we hope that the method presented here will show a more disciplined approach to nonlinear discovery than some more black box methods such as artificial neural networks.

Secondly, we also offer scalable algorithms able to solve these hierarchical $(k, \ell)$-sparse regression problems using modern optimization techniques. In accordance with previous results \citep{bertsimas2016sparse}, we show that exact sparse regression is not outside the realm of the possible even for very high-dimensional problems with a number of features $f$ and samples $n$ in the 100,000s. We will indeed show that we can reliably discover nonlinear relationships using a combination of smart heuristics and exact sparse regression using a cutting plane approach for convex integer optimization.

In order to judge the quality of a proposed regressor $g^\star$, we must measure on the one hand to what extent all the relevant monomial features are discovered. In order to do so, while at the same time avoiding notational clutter, we need to establish a way to refer to each of the monomials of degree $r$ in $p$ variables in an efficient fashion. Let $m_j:\Re^p\to\Re$ for each $j$ in $[f]$ denote a distinct monomial in $p$ inputs of degree at most $r$. We define the accuracy of a regressor $g^\star$ as
\begin{equation}
  A\% \defn \frac{\abs{\supp(g^\star) \cap \supp(g_{\mathrm true})}}{\abs{\supp(g_{\mathrm true})}},
\end{equation}
where $\supp(g)$ represents all $j$ such that monomial $m_j$ contributes to the polynomial $g$. This accuracy measure $A\%$ thus represents the proportion of true underlying monomial features discovered by the proposed polynomial regressor $g^\star$. On the other hand, we can use 
\begin{equation}
  F\% \defn \frac{\abs{\supp(g^\star) \setminus \supp(g_{\mathrm true})}}{\abs{\supp(g^\star)}}
\end{equation}
to quantify how many irrelevant features were wrongly included in the process. Perfect recovery occurs when the method gives the whole truth ($A\%=100$) and nothing but the truth $(F\%=0)$. In practice, however, machine learning methods must inevitable make a choice between both desirables.

We intend to illustrate that exact sparse regression methods have an inherent edge over proxy based sparse heuristics. Proxy based methods such as \texttt{Lasso} do indeed have several well documented shortcomings. First and foremost, as argued by \citet{bertsimas2014statistics} they do not recover very well the sparsity pattern. Furthermore, the \texttt{Lasso} leads to biased regression regressors, since the $\ell_1$-norm penalizes both large and small coefficients uniformly. The ability of our method to identify all relevant features is shown empirically to experience a phase transition. There exists a critical number of data samples $n_0$ such that when presented sufficient data $n>n_0$ our method recovers the ground truth ($A\%\approx 100$) completely, whereas otherwise its accuracy $A\%$ tends to zero. Crucially, the same number of samples $n_0$ also enables our method to reject most irrelevant features ($F\%\approx 0$) as well. We thus show that we significantly outperform \texttt{Lasso} in terms of offering regressors with a larger number of relevant features (bigger $A\%$) for far fewer nonzero coefficients (smaller $F\%$) enjoying at the same time a marginally better prediction performance. In the regime $n>n_0$ where our method is statistically powerful $(A\%\approx 100$, $F\%\approx 0$), its computational complexity is furthermore on par with sparse heuristics such as \texttt{Lasso}. This last observation takes away the main propelling justification for most heuristic based sparsity approaches.

\subsection*{Notation}

The knapsack set $\S^p_k$ denotes here the binary set $\S_k^p\defn\{s \in \{0,1\}^p:\textstyle\sum_{j\in[p]} s_j\leq k\}$, which contains all binary vectors $s$ selecting $k$ components out of $p$ possibilities. Assume that $(y_1, \dots, y_p)$ is a collection of elements and suppose that $s$ is an element of $\S^p_k$, then $y_s \in \Re^{\abs{s}}$ denotes the sub-collection of $y_j$ where $s_j = 1$. Similarly, we use $\supp(x) = \set{s \in \{0, 1\}^p}{s_j = 1 \iff x_j \neq 0} $ to denote those indices of a vector $x$ which are nonzero. We denote by $\S_+^n$ ($\S_{++}^n$) the cone of $n\times n$ positive semidefinite  (definite) matrices. Given a matrix $K\in\Re^{n\times n}$, we denote its element-wise $r$th power or Hadamard power as $K^{\circ r}$, i.e., we have that
\[
  K^{\circ r} \defn
\begin{pmatrix}
  K_{11}^r & K_{12}^r & \dots & K_{1n}^r \\
  K_{21}^r & K_{22}^r & \dots & K_{2n}^r \\
  \vdots & \vdots & \ddots & \vdots \\
  K_{n1}^r & K_{n2}^r & \dots & K_{nn}^r
\end{pmatrix}.
\]

\section{Hierarchical $(k, \ell)$-Sparse Polynomial Regression}
\label{sec:sparse_polynomial_regression}

In this section, we discuss two formulations of the hierarchical sparse regression problem \eqref{eq:abstract_sparse_regression} through a standard integer optimization lens. In the last twenty plus years the computational power of integer optimization solves has increased at a dramatic speed \citep{bertsimas2014statistics}. Where twenty years ago integer optimization for statistics was branded impossible, recent work \citep{bertsimas2016sparse, bertsimas2014statistics} has shown convincingly that this position needs to be revisited. The position that exact sparse regression is an unattainable goal only to be striven for via admittedly elegant convex heuristics such as \texttt{Lasso} should not be held any longer.

We considered in the sparse regression problem \eqref{eq:abstract_sparse_regression} as features the set of all monomials of degree at most $r$. It is clear that this sparse hierarchical regression problem over polynomials can equivalently stated as an optimization problem over their  coefficients in the monomial basis $m_j$ for $j$ in $[f]$. We will avoid the need to explicitely order each these monomials as follows. Define the dependents $D(i)$ of any data input $i$ as the set of indices $j$ such that the monomial $m_j$ depends on input $i$. Similarly, we define the ancestors $A(j)$ of any $j$ as the multiset of inputs making up the monomial $m_j$. Instead of using the classical monomial basis, we consider a scaled variant in which we take $m_j(\one_p)$ to coincide with the square root of the number of distinct ways to order the multiset $A(j)$. This rescaling of the monomials comes in very handy when discussing the solution of the regression problem \eqref{eq:abstract_regression} in Section \ref{sec:ranking}. In fact, the same scaling is implicitly made by the Mercer kernel based approach as well.

To make the discussion more concrete, we consider a data problem with $p=3$ inputs and associated data $(y_t, x_{t, 1}, x_{2, t}, x_{3, t})$ for $t \in [n]$. We consider all monomials on the three inputs of degree at most two, i.e., we consider the monomials and their corresponding indices as given below.

\begin{center}
  \begin{tabular}{|l|cccccccccc|}
    \hline
    Monomial $m_j$ & $1$ & $\sqrt{2}x_1$ & $\sqrt{2}x_2$ & $\sqrt{2}x_3$ & $x_1^2$ & $\sqrt{2}x_1 x_2$ & $\sqrt{2}x_1 x_3$ & $x_2^2$ & $\sqrt{2}x_2 x_3$ & $x_3^3$ \\
    Index $j$ & $1$ & $2$ & $3$ & $4$ & $5$ & $6$ & $7$ & $8$ & $9$ & $10$ \\
    \hline
  \end{tabular}
\end{center}
The set $D(i)$ corresponds to the set of indices of the monomials in which input $x_i$ participates. In our example above we have $D(1) = \{2, 5, 6, 7\}$ corresponding to the monomials $\{\sqrt{2} x_1, x_1^2, \sqrt{2} x_1 x_2, \sqrt{2} x_1 x_3 \}$. The set $A(j)$ corresponds to all inputs that are involved in the monomial with index $j$. Again as an illustration, in our example $A(7) = \{1, 3\}$ corresponding to the inputs $x_1$ and $x_3$. 

\subsection{Mixed Integer Formulation}

With the help of the previous definitions, we can now cast the hierarchical $(k, \ell)$-sparse regression problem \eqref{eq:abstract_sparse_regression} as a standard optimization problem. The problem of hierarchical sparse regression can indeed be cast as the following \ac{mio} problem
\begin{equation}
\label{eq:bigm:primal}
\begin{array}{rl}
	\min			& \frac{1}{2} {\displaystyle\sum_{t\in[n]}}  \Vert y_t - \sum_{j\in[f]} w_j \cdot m_j(x_t) \Vert^2 + \frac{1}{2\gamma} \norm{w}^2 \\[1em]
	\st			& w \in \Re^f, ~s \in \S^f_\ell, ~h\in \S^p_k, \\[0.5em]
				&  s_j \leq h_i\quad \forall i \in A(j), \hspace{1.05em} \forall j \in [f], \\[0.5em]
  				& -\mc M s_j \leq w_j \leq \mc M s_j \quad \forall j \in [f],
\end{array}
\end{equation}
using a big-$\mc M$ formulation. Its optimal solution $w^\star$ gives the coefficients of the polynomial $g^\star(x)=\sum_{j\in [f]} w^\star_j m_j(x)$ in \eqref{eq:abstract_sparse_regression} best describing the relationship between input and observations in our monomial basis $\{m_j\}$. The coefficient $w_j$ of any monomial $m_j$ is only then nonzero when $s_j=1$ as per the first constraint in \eqref{eq:bigm:primal} for a sufficiently large constant $\mc M$. This constant $\mc M$ needs to be estimated from data. Although nontrivial, this can be done using the results found in \citep{bertsimas2014statistics}. In any case, the binary variable $s \in \S^f_\ell$ represents the sparsity pattern in the monomials, i.e., which $\ell$ monomials are used out of $f$ potential candidates.  The ultimate constraint of formulation \eqref{eq:bigm:primal} encodes the hierarchical nature of our $(k, \ell)$-sparsity requirement. Only those monomials $m_j$ such that the input $h_i =1$ is selected for all its ancestors $i \in A(j)$ are considered as potential regressors. In all, the binary constraint $$(s, h) \in \S^{f, p}_{\ell, k} \defn \{s\in \S^f_\ell, h\in \S^p_k ~: ~s_j \leq h_i, \, \forall i \in A(j), \,\forall j \in [f]\}$$ hence represents the postulated hierarchical $(k, \ell)$-sparsity pattern.

To use the example discussed before in which we have three inputs and monomials of order two, the monomial $m_7(x) = \sqrt{2} x_1 x_3$ can only be included as a regressor if the variable $s_7 = 1$. The variable $s_7$ can only then be nonzero if both inputs $x_1$ and $x_2$ are selected which requires that the variables $h_1=1$ and $h_3=1$. The resulting optimal regressor polynomial $h^\star(x) = \sum_{j\in[f]} w_j^\star\cdot m_j(x)$ thus counts at most $\ell$ monomials depending on at most $k$ regressor inputs.

Although the direct formulation \eqref{eq:bigm:primal}  of the hierarchical $(k, \ell)$-sparse regression problem results in a well posed \ac{mio} problem, the constant $\mc M$ needs to be chosen with extreme care as not to impede its numerical solution. The choice of this data dependent constant $\mc M$ indeed affects the strength of the \ac{mio} formulation \eqref{eq:bigm:primal} and is critical for obtaining solutions quickly in practice \citep{bertsimas2014statistics}. Furthermore, as the regression dimension $p$ grows, explicitly constructing the \ac{mio} problem \eqref{eq:bigm:primal}, let alone solving it, becomes burdensome. In order to develop an exact scalable method a different perspective on sparse regression is needed. In the subsequent sections we develop an exact method which avoids using a big-$\mc M$ formulation while at the same time avoiding explicit construction of the problem at solve time.

\subsection{Convex Integer Formulation}

We next establish that the sparse regression problem \eqref{eq:bigm:primal} can in fact be represented as a pure binary optimization problem. By doing so we will eliminate the dependence of our formulation on the data dependent constant $\mc M$. Our results here closely resemble those presented in \citet{bertsimas2016sparse}.
We will need the help of the following supporting lemma regarding linear regression.

\begin{lemma}[The regression loss function $c$]
\label{lemm:convexity}
The least-squares regression cost $c(Z Z\tpose) \defn \min_w \frac{1}{2} \norm{Y-Z w}^2 + \frac{1}{2\gamma} \norm{w}^2$ admits the following explicit characterization
\begin{align}
		c(Z Z\tpose) &= \frac12 Y\tpose \left(\eye{n} + \gamma Z Z\tpose \right)^{-1} Y. \label{eq:char2}
\end{align}
\end{lemma}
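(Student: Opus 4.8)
The plan is to reduce the inner unconstrained least-squares problem to the standard minimization of a strictly convex quadratic in $w$, read off its optimal value in closed form, and then convert the resulting $f\times f$ matrix inverse into the $n\times n$ inverse appearing in \eqref{eq:char2} by the Woodbury (Sherman--Morrison) identity.

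First I would expand the objective as $\tfrac12\norm{Y-Zw}^2+\tfrac{1}{2\gamma}\norm{w}^2=\tfrac12 w\tpose(Z\tpose Z+(1/\gamma)\eye{f})w-(Z\tpose Y)\tpose w+\tfrac12\norm{Y}^2$. Since $\gamma>0$, the Hessian $Z\tpose Z+(1/\gamma)\eye{f}$ is positive definite, so the problem has the unique minimizer $w^\star=(Z\tpose Z+(1/\gamma)\eye{f})^{-1}Z\tpose Y$ coming from the first-order stationarity condition, and the optimal value equals $\tfrac12\norm{Y}^2-\tfrac12 Y\tpose Z(Z\tpose Z+(1/\gamma)\eye{f})^{-1}Z\tpose Y=\tfrac12 Y\tpose\bigl(\eye{n}-Z(Z\tpose Z+(1/\gamma)\eye{f})^{-1}Z\tpose\bigr)Y$.

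Next I would invoke the matrix inversion lemma in the form $\eye{n}-Z(Z\tpose Z+(1/\gamma)\eye{f})^{-1}Z\tpose=(\eye{n}+\gamma ZZ\tpose)^{-1}$, which follows from Woodbury applied to $(\eye{n}+\gamma ZZ\tpose)^{-1}$, using that $\eye{n}+\gamma ZZ\tpose$ is positive definite and hence invertible. Substituting this identity into the expression for the optimal value yields exactly $c(ZZ\tpose)=\tfrac12 Y\tpose(\eye{n}+\gamma ZZ\tpose)^{-1}Y$, as claimed.

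I do not expect any serious obstacle here; the only points requiring care are checking invertibility of both $Z\tpose Z+(1/\gamma)\eye{f}$ and $\eye{n}+\gamma ZZ\tpose$ (both guaranteed by $\gamma>0$) and tracking dimensions correctly in the Woodbury identity. An essentially equivalent alternative would avoid the explicit minimizer by completing the square or by forming the Lagrangian dual; one can also verify the required identity directly by multiplying $(\eye{n}+\gamma ZZ\tpose)$ against $\eye{n}-Z(Z\tpose Z+(1/\gamma)\eye{f})^{-1}Z\tpose$ and simplifying with the push-through identity $Z(Z\tpose Z+(1/\gamma)\eye{f})^{-1}=(ZZ\tpose+(1/\gamma)\eye{n})^{-1}Z$. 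Either route, the computation is routine.
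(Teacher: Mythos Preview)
Your proposal is correct and follows essentially the same route as the paper: solve the unconstrained quadratic for $w^\star$ via the normal equations, substitute back to obtain $\tfrac12 Y\tpose Y-\tfrac12 Y\tpose Z(Z\tpose Z+(1/\gamma)\eye{})^{-1}Z\tpose Y$, and then apply the matrix inversion lemma to rewrite this as $\tfrac12 Y\tpose(\eye{n}+\gamma ZZ\tpose)^{-1}Y$. Your write-up is in fact slightly more explicit than the paper's about verifying positive definiteness and tracking dimensions, but the argument is the same.
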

\begin{proof}
As the regression problem over $w$ in $\Re^p$ is an unconstrained \ac{qo} problem, the optimal value $w^\star$ satisfies the linear relationship $(\eye{p}/\gamma + Z\tpose Z)w^\star = Z\tpose Y.$ Substituting the expression for the optimal linear regressor $w^\star$ back into optimization problem, we arrive at 
\[
	c(Z Z\tpose) = 1/2 Y\tpose Y - 1/2 Y\tpose Z \left(\eye{p}/\gamma + Z\tpose Z\right)^{-1} Z\tpose Y.
\]
The final characterization can be derived from the previous result with the help of the matrix inversion lemma found stating the identity 
\(
\left(\eye{n} + \gamma Z Z\tpose\right)^{-1} = \eye{n} - Z \left( \eye{p}/\gamma + Z\tpose Z\right)^{-1} Z\tpose.
\)
\qed        
\end{proof}

Lemma \ref{lemm:convexity} will enable us to eliminate the continuous variable $w$ out of the \ac{mio} sparse regression formulation \eqref{eq:bigm:primal}. The following result provides a different pure integer approach to hierarchical sparse regression. It will form the basis to our attempts to solve hierarchical regression problems.

\begin{theorem}[Hierarchical $(k, \ell)$-sparse regression]
\label{thm:sparse_regression}
The hierarchical $(k, \ell)$-sparse regression problem \eqref{eq:abstract_sparse_regression} can be reformulated as the pure \ac{cio} problem 
\begin{equation}
\label{eq:opt:miop:kernel}
\begin{array}{rl}
	\min 	& \displaystyle \frac12 Y\tpose \left(\eye{n} + \gamma \textstyle\sum_{j\in[f]} s_j K_j \right)^{-1} Y \\[0.5em]
	\st    	& s\in \S^f_\ell, ~ h \in \S^p_k, \\[0.5em]
		& s_j \leq h_i\quad \forall i \in A(j), \hspace{1.05em} \forall j \in [f],
\end{array}
\end{equation}
where the micro kernel matrices $K_j$ in $\S^n_+$ are defined as the dyadic outer products $K_j \defn m_j (X) \cdot m_j(X)\tpose$. 
\end{theorem}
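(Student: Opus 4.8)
The plan is to start from the big-$\mc M$ formulation \eqref{eq:bigm:primal}, which the preceding discussion has identified with the hierarchical sparse regression problem \eqref{eq:abstract_sparse_regression}, and to minimize out the continuous coefficient vector $w$ for each fixed choice of the binary variables. Concretely, I would fix a pair $(s,h) \in \S^{f, p}_{\ell, k}$ and observe that, for $\mc M$ large enough, the box constraint $-\mc M s_j \le w_j \le \mc M s_j$ simply forces $w_j = 0$ at every index $j$ with $s_j = 0$ while leaving the remaining coordinates unconstrained. Collecting the columns $m_j(X) \in \Re^n$ over the indices $j$ with $s_j = 1$ into a matrix $Z_s \in \Re^{n\times\abs{s}}$ and writing $w_s$ for the corresponding subvector of $w$, the fitted-value vector is $Z_s w_s$, the residual sum of squares is $\norm{Y - Z_s w_s}^2$, and $\norm{w}^2 = \norm{w_s}^2$ because the dropped coordinates vanish. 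Hence the inner minimization over $w$ is exactly the unconstrained ridge regression $\min_{w_s} \tfrac12 \norm{Y - Z_s w_s}^2 + \tfrac1{2\gamma}\norm{w_s}^2 = c(Z_s Z_s\tpose)$ in the notation of Lemma \ref{lemm:convexity}.

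Next I would apply the closed-form characterization \eqref{eq:char2} — which is valid verbatim for a design matrix with $\abs{s}$ columns in place of $p$ — to conclude that this inner value equals $\tfrac12 Y\tpose(\eye{n} + \gamma Z_s Z_s\tpose)^{-1} Y$. The final step is the algebraic identity
\[
Z_s Z_s\tpose \ = \ \sum_{j\,:\,s_j = 1} m_j(X)\, m_j(X)\tpose \ = \ \sum_{j \in [f]} s_j K_j ,
\]
which holds because $s_j \in \{0,1\}$ and $K_j = m_j(X) m_j(X)\tpose$. Substituting back, the optimal value for fixed $(s,h)$ is $\tfrac12 Y\tpose(\eye{n} + \gamma \sum_{j\in[f]} s_j K_j)^{-1} Y$, and taking the minimum over $(s,h) \in \S^{f, p}_{\ell, k}$ — i.e.\ over $s \in \S^f_\ell$, $h \in \S^p_k$ with $s_j \le h_i$ for all $i \in A(j)$ and all $j\in[f]$ — is precisely the \ac{cio} problem \eqref{eq:opt:miop:kernel}.

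Everything past the reduction to a ridge regression is bookkeeping plus Lemma \ref{lemm:convexity}, so the one point that deserves care — and the main obstacle — is the claim that the finite, data-dependent constant $\mc M$ can be taken large enough that the box constraint in \eqref{eq:bigm:primal} never clips the optimal coefficients and therefore acts purely as the support restriction $w_j = 0$ for $s_j = 0$. I would handle this either by invoking the a-priori bound on $\norm{w^\star}_\infty$ established in \citep{bertsimas2014statistics}, or — cleaner for this theorem — by bypassing $\mc M$ altogether and arguing straight from the definition of $\mc P_{k,\ell}$, since a $(k,\ell)$-sparse polynomial is by definition $\sum_j w_j m_j$ with unconstrained coefficients on a monomial index set of cardinality at most $\ell$ whose ancestor inputs number at most $k$, which is exactly a feasible $(s,h,w)$ triple. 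A minor secondary point: the definition of $\mc P_{k,\ell}$ speaks of ``$\ell$ monomials'' whereas $\S^f_\ell$ only imposes $\sum_{j\in[f]} s_j \le \ell$; this is immaterial, as enlarging the selected set $s$ by indices carrying zero coefficients alters neither $g$ nor the objective, and the same remark applies to the input budget $k$.
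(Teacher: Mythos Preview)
Your proposal is correct and follows essentially the same route as the paper's proof: fix the binary support, reduce the inner problem to an unconstrained ridge regression, invoke Lemma~\ref{lemm:convexity}, and expand $Z_s Z_s\tpose = \sum_{j} s_j K_j$. You are in fact more careful than the paper about the big-$\mc M$ constraint never binding and about the ``at most $\ell$'' versus ``exactly $\ell$'' slack, both of which the paper glosses over.
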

\begin{proof}
We start the proof by separating the optimization variable $w$ in the sparse regression problem \eqref{eq:bigm:primal} into its support $s \defn \supp{w}$ and the corresponding non-negative entries $w_s$. Evidently, we can now write the sparse regression problem \eqref{eq:bigm:primal} as the bilevel minimization problem 
\begin{equation}
\label{eq:reformulation}
\min_{s, h}\left[\min_{w \in \Re^k} ~ \frac{1}{2\gamma} \norm{w}^2 + \frac{1}{2} \sum_{t\in[n]}  \Vert y_t - \textstyle\sum_{\set{j \in [n]}{s_j=1}} w_{j} \cdot m_j(x_t) \Vert^2 \right].
\end{equation}
It now remains to be shown that the inner minimum can be found explicitly as the objective function of the optimization problem \eqref{eq:opt:miop:kernel}. Using Lemma \ref{lemm:convexity}, the minimization problem can be reduced to the minimization problem $\min \{c(m_s(X)\cdot m_s(X)\tpose) : (s, h) \in \mc \S^{p, r}_{k, \ell} \}$. We finally remark that the outer product can be decomposed as the sum $m_s(X)\cdot m_s(X)\tpose = \textstyle\sum_{j\in[p]} s_j \cdot m_j(X) \cdot m_j(X)\tpose$, thereby completing the proof. \qed
\end{proof}

\citet{bertsimas2016sparse} provide an algorithm which can solve a related sparse linear regression problem up to dimensions $f$ and $n$ in the order of $100,000$s based on a cutting plane formulation for integer optimization. Contrary to traditional complexity theory which suggests that the difficulty of a problem increases with size, there exists a critical number of observations $n_0$ such that the sparse regression problems seem to have the property that for a small number of samples $n < n_0$, an exact regressor is not easy to obtain, but most importantly its solution does not recover the truth ($A\%\approx 0$ and $F\%\approx 100$). For a large number of samples $n > n_0$ however, exact sparse regression can be done extremely fast and perfectly separates ($A\%\approx 100$ and $F\%\approx 0$) the true monomial features from the obfuscating bulk. These results warrant the possibility of nonlinear feature discovery for regression task of practical size as well.

Despite the previous encouraging results, for all but midsize problems hierarchical sparse regression quickly becomes problematic. The effective number of regression features $f$ is indeed combinatorial in the number of inputs $p$ and degree $r$ of the considered polynomials. Our key insight is to triage the inputs first heuristically using an efficient input ranking method described in the subsequent section. Later in Section \ref{sec:empirical_results} we will show that this two-step procedure outlined in Figure \ref{fig:2-step-procedure} goes a long way to solve practical hierarchical sparse regression problems.

\section{Polynomial Kernel Input Ranking}
\label{sec:ranking}

The objective in this section is to present an efficient method which can address the high-dimensional nature of exact sparse regression by ignoring irrelevant regression inputs and working with promising candidates only. The reader might wonder at this point whether any such attempt would not entail the solution of the original hierarchical sparse regression problem. Here, however, we do not claim to triage the inputs optimally, but rather aim for a simple approximate yet fast method. We will attempt to do so by leveraging the fact that the nonlinear regression problem \eqref{eq:abstract_regression} without sparse constraints can be solved efficiently.

A seminal result due to \citet{vapnik1998support} states that the feature dimensionality $f$ of the unconstrained regression problem \eqref{eq:abstract_regression} surprisingly does not play any role in its numerical solution. Indeed, the feature dimensionality $f$ can be done away with in its entirety using the now classical \citet{mercer1909functions} kernel representation theorem. We can state the polynomial regression problem \eqref{eq:abstract_regression} as an optimization problem in terms of  coefficients in the monomial basis
\begin{equation}
\label{eq:linear_regression:primal}
\begin{array}{rl}
	\min		& \frac{1}{2} \sum_{t\in[n]} \Vert y_t - \sum_{j\in[f]} w_j \cdot m_j(x_t) \Vert^2 + \frac{1}{2\gamma} \norm{w}^2 \\[0.5em]
	\st			& w \in \Re^f. \\
\end{array}
\end{equation}
We state the Mercer kernel representation in Theorem \ref{thm:vapnik} for the sake of completeness regarding the dual of the regression problem \eqref{eq:linear_regression:primal}.  It should be noted that surprisingly the dimension $f$ does not play a role but instead the number of samples $n$ is of importance. This previous observation is what has propelled kernel learning algorithms as viable nonlinear regression methods \citep{scholkopf2002learning}.

\begin{theorem}[Mercer Kernel Representation \citep{vapnik1998support}]
\label{thm:vapnik}
The polynomial regression problem \eqref{eq:linear_regression:primal} can equivalently be formulated as the unconstrained maximization problem 
\begin{equation}
\label{eq:regression:dual}
%\tag{$D$}x
\begin{array}{rl}
	c(K) = \max	& -\frac{\gamma}{2} \alpha\tpose K \alpha  - \frac{1}{2} \alpha\tpose \alpha + Y\tpose \alpha  \\[0.5em]
	\st    	& \alpha \in \Re^n, \\
\end{array}
\end{equation}
where the positive semidefinite kernel matrix $K \defn m(X)\cdot m(X)\tpose$ allows for an efficient characterization as the Hadamard power
\(
	K = (X X\tpose + \mb 1_{n\times n})^{\circ r}.
\)
\end{theorem}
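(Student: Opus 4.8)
The plan is to establish the two separate assertions of the theorem: the Lagrangian duality between the primal least-squares problem \eqref{eq:linear_regression:primal} and the dual \eqref{eq:regression:dual}, and then the closed form $m(X)m(X)\tpose = (XX\tpose + \mb 1_{n\times n})^{\circ r}$ for the kernel matrix.

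For the duality, I would first collect the features into the matrix $Z \defn m(X) \in \Re^{n\times f}$, so that \eqref{eq:linear_regression:primal} reads $\min_w \frac12 \norm{Y-Zw}^2 + \frac{1}{2\gamma}\norm{w}^2$, and lift the residual into an explicit variable via $\xi = Y - Zw$. The reformulation $\min_{w,\xi}\set{\frac12\norm{\xi}^2 + \frac{1}{2\gamma}\norm{w}^2}{\xi = Y - Zw}$ is a strongly convex quadratic program with only affine equality constraints, so the minimum is attained, Slater's condition holds trivially, and strong duality applies with zero gap. Attaching a multiplier $\alpha \in \Re^n$ to the equality, the Lagrangian $\frac12\norm{\xi}^2 + \frac{1}{2\gamma}\norm{w}^2 + \alpha\tpose(Y - Zw - \xi)$ separates additively in $\xi$ and $w$; minimizing the quadratic in $\xi$ yields $\xi^\star = \alpha$ with value $-\frac12\norm{\alpha}^2$, and minimizing the quadratic in $w$ yields $w^\star = \gamma Z\tpose\alpha$ with value $-\frac{\gamma}{2}\alpha\tpose ZZ\tpose \alpha$. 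Collecting the pieces, the dual function is $-\frac{\gamma}{2}\alpha\tpose ZZ\tpose\alpha - \frac12\alpha\tpose\alpha + Y\tpose\alpha$, which is exactly \eqref{eq:regression:dual} with $K = ZZ\tpose = m(X)m(X)\tpose$. As a consistency check one can substitute the unique maximizer $\alpha^\star = (\eye{n} + \gamma K)^{-1} Y$ back in and recover $\frac12 Y\tpose(\eye{n}+\gamma K)^{-1}Y$, matching $c(ZZ\tpose)$ of Lemma \ref{lemm:convexity}.

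For the kernel identity it suffices to match the two matrices entrywise: the $(s,t)$ entry of $m(X)m(X)\tpose$ is $\sum_{j\in[f]} m_j(x_s)\, m_j(x_t)$, and I claim it equals $(x_s\tpose x_t + 1)^r$. Writing each basis element as $m_j(x) = c_j\prod_{i\in[p]} x_i^{\alpha_i^{(j)}}$ with exponent vector $\alpha^{(j)}$ of total degree $\abs{\alpha^{(j)}}\leq r$, and using the bijection between $j\in[f]$ and such exponent vectors, the sum becomes $\sum_{\abs{\alpha}\leq r} c_\alpha^2 \prod_{i\in[p]} (x_{s,i}x_{t,i})^{\alpha_i}$. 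On the other hand, the multinomial theorem applied to the $p+1$ summands of $1 + \sum_{i\in[p]} x_{s,i}x_{t,i}$ gives $(x_s\tpose x_t + 1)^r = \sum_{\abs{\alpha}\leq r}\frac{r!}{(r-\abs{\alpha})!\,\alpha_1!\cdots\alpha_p!}\prod_{i\in[p]}(x_{s,i}x_{t,i})^{\alpha_i}$. The two expansions coincide exactly because of the adopted rescaling of the monomial basis: the coefficient $c_\alpha^2 = m_j(\one_p)^2$ is taken to be the number of distinct orderings of the multiset $A(j)$, which, once $m_j$ is regarded as a degree-$r$ form in the $p+1$ variables $(1, x_1, \dots, x_p)$ (so that $A(j)$ carries $r-\abs{\alpha}$ copies of the constant symbol), equals precisely the multinomial coefficient $\frac{r!}{(r-\abs{\alpha})!\,\alpha_1!\cdots\alpha_p!}$. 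This identifies $K_{st}$ with the $(s,t)$ entry of $(XX\tpose + \mb 1_{n\times n})^{\circ r}$; positive semidefiniteness of $K$ then follows either from its Gram representation $m(X)m(X)\tpose$ or from the Schur product theorem applied to the elementwise power of $XX\tpose + \mb 1_{n\times n} \succeq 0$.

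I expect the duality step to be routine convex analysis. The one place that needs care is the combinatorial bookkeeping in the second part: verifying that $m_j(\one_p)^2$ — the number of orderings of $A(j)$ under the convention above — is exactly the multinomial coefficient that multiplies $\prod_i(x_{s,i}x_{t,i})^{\alpha_i}$ in $(1 + x_s\tpose x_t)^r$. This matching is the entire purpose of the rescaling, and it is why, as remarked earlier, the same scaling is implicit in the Mercer kernel construction.
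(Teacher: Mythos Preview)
Your argument is correct and follows essentially the same route as the paper. The paper does not spell out the duality step at all (it is attributed to \citet{vapnik1998support}), and for the kernel identity it likewise matches entries via the multinomial expansion of $(1+x_s\tpose x_t)^r$, invoking the rescaling of the monomial basis to make the coefficients line up. Your treatment is in fact more careful on the combinatorial point: you make explicit that $A(j)$ must be read as a size-$r$ multiset in the $p{+}1$ symbols $(1,x_1,\dots,x_p)$ so that the number of orderings equals the full multinomial coefficient $\frac{r!}{(r-|\alpha|)!\,\alpha_1!\cdots\alpha_p!}$; the paper leaves this implicit and simply calls it the ``binomial expansion theorem.''
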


Theorem \ref{thm:vapnik} uses the Mercer kernel representation which establishes that the outer product $m(X)\cdot m(X)\tpose$ can be characterized as the element-wise Hadamard power $(X X\tpose + \mb 1_{n\times n})^{\circ r}$ for our specific polynomial bases. Indeed, for any $t$ and $t'$ in $[n]$ we have
\begin{align*}
  K(t, t') \defn & ~m(x_t)\tpose \cdot m(x_{t'}) \\
  = & ~[\sqrt{\norm{ A(1) }}\cdot m_1(x_t), \dots, \sqrt{\norm{A(f)}} \cdot m_f(x_t)]\tpose \cdot [\sqrt{\norm{A(1)}}\cdot m_1(x_{t'}), \dots, \sqrt{\norm{A(f)}}\cdot m_f(x_{t'})] \\
  = & ~\textstyle\sum_{j\in[f]} \norm{A(j)} m_j(x_t)\cdot m_j(x_{t'}) \\
  = & ~\textstyle\sum_{j\in[f]} \norm{A(j)} m_j([x_{t, 1}\cdot x_{t', 1}, \dots, x_{t, p}\cdot x_{t', p}]) \\
  = & ~(1 + \textstyle \sum_{i\in[p]} x_{t, i}\cdot x_{t', i})^r\\
  = & ~(1+x_t\tpose x_{t'} )^r
\end{align*}
where $\norm{A(j)}$ denotes here the number of distinct ways to order the multiset $A(j)$. The penultimate equality is recognized as the binomial expansion theorem. Note that for the Mercer kernel representation to hold, the monomial basis had indeed to be properly normalized using $\sqrt{\norm{A(j)}}$ as explained in the beginning of Section \ref{sec:sparse_polynomial_regression}. This well known but crucial observation seems to have been made first by \citet{poggio1975optimal}.

The size of the kernelized regression problem \eqref{eq:regression:dual} scales only with the number of data points $n$ rather than the feature dimension $f$. It could be remarked that as the kernelized regression problem is unconstrained it admits a closed form solution in the form of the linear system 
\(
	(\eye{n} + \gamma K) \alpha^\star = Y.
\)
The optimal regression coefficients $w^\star$ in formulation \eqref{eq:linear_regression:primal} are linearly related to the optimal dual variable $\alpha^\star$ in formulation \eqref{eq:regression:dual} via the complementarity conditions which here read
\begin{equation}
\label{eq:complementarity}
	w_j^\star = \gamma \cdot m_j(X)\tpose \alpha^\star.
\end{equation}
Although this last relationship is linear, computing the coefficients in the monomial basis might still prove a daunting task merely because of the shear number of them. In the following, we show that we can nevertheless compute the Euclidean norm of the optimal coefficients $w^\star_j$ in front of the monomials depending on a certain input $i$ efficiently. 

\begin{proposition}
\label{eq:coefficients}
The Euclidean norm of the coefficients $w^\star_j$ in front of all monomials $m_j$ depending on input $i$ is related to the dual optimal variable $\alpha^\star$ in \eqref{eq:regression:dual} as
\begin{equation}
\label{eq:coefficients_size}
	\textstyle \norm{w^\star_{D(i)}}^2 = \sum_{j \in D(i)} \norm{w_j^\star}^2 = \gamma^2 \cdot \alpha^{\star\top} K_i \,\alpha^\star,
\end{equation}
where the kernel matrix $K_i$ can be characterized explicitly as $K -  (X X\tpose - X_i X_i\tpose + \mb 1_{n\times n})^{\circ r}$.
\end{proposition}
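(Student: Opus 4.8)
The plan is to combine the complementarity relation \eqref{eq:complementarity} with a rank-one decomposition of $K_i$, and then recognize the \emph{complementary} sum of rank-one pieces as a Mercer kernel over $p-1$ variables.

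First I would apply \eqref{eq:complementarity} monomial-by-monomial. For each $j \in D(i)$ we have $w_j^\star = \gamma\, m_j(X)\tpose \alpha^\star$, a scalar, so $\norm{w_j^\star}^2 = \gamma^2\, \alpha^{\star\top} m_j(X)\, m_j(X)\tpose \alpha^\star$. Summing over $j \in D(i)$ and factoring out the common quadratic form gives
\[
  \norm{w^\star_{D(i)}}^2 = \sum_{j\in D(i)} \norm{w_j^\star}^2 = \gamma^2\, \alpha^{\star\top} \Big( \textstyle\sum_{j\in D(i)} m_j(X)\, m_j(X)\tpose \Big)\, \alpha^\star .
\]
It therefore remains only to show that $\sum_{j\in D(i)} m_j(X)\, m_j(X)\tpose$ equals $K - (X X\tpose - X_i X_i\tpose + \mb 1_{n\times n})^{\circ r}$.

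For that I would pass to the complement. By Theorem \ref{thm:vapnik} we have $K = m(X) m(X)\tpose = \sum_{j\in[f]} m_j(X)\, m_j(X)\tpose$, so the claimed identity is equivalent to $\sum_{j\notin D(i)} m_j(X)\, m_j(X)\tpose = (X X\tpose - X_i X_i\tpose + \mb 1_{n\times n})^{\circ r}$. Now the indices $j \notin D(i)$ are exactly those monomials $m_j$ whose ancestor multiset $A(j)$ omits input $i$; equivalently, these are precisely the degree-at-most-$r$ monomials in the $p-1$ variables $\{x_{i'} : i' \neq i\}$, and deleting input $i$ changes neither $A(j)$ nor the ordering count $\norm{A(j)}$ that defines the normalization of the basis $\{m_j\}$. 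Hence the binomial/multinomial computation carried out just below Theorem \ref{thm:vapnik} applies verbatim to this reduced set of inputs: for all $t, t' \in [n]$,
\[
  \textstyle\sum_{j\notin D(i)} m_j(x_t)\, m_j(x_{t'}) = \big(1 + \sum_{i'\neq i} x_{t,i'} x_{t',i'}\big)^r = \big(1 + x_t\tpose x_{t'} - x_{t,i}\, x_{t',i}\big)^r ,
\]
which is exactly the $(t,t')$ entry of $(X X\tpose - X_i X_i\tpose + \mb 1_{n\times n})^{\circ r}$. Subtracting this from $K$ and combining with the first display completes the proof.

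The one place that warrants care is this last step: one must check that the scaling factors $\sqrt{\norm{A(j)}}$ built into the basis $\{m_j\}$ are exactly the multinomial weights produced by expanding $\big(1 + \sum_{i'\neq i} x_{t,i'} x_{t',i'}\big)^r$, so that the Gram matrix of the $(p-1)$-variable sub-basis is again a clean Hadamard power. Since the multiset $A(j)$ — together with the implicit homogenizing slack absorbed by the constant term — is unaffected by removing an input that does not appear in $m_j$, this is immediate from the derivation already in the text, and everything else is substitution and linearity.
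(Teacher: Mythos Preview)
Your proof is correct and follows essentially the same route as the paper: use the complementarity relation \eqref{eq:complementarity} to write the squared norm as $\gamma^2 \alpha^{\star\top}\big(\sum_{j\in D(i)} K_j\big)\alpha^\star$, then identify the complementary sum $\sum_{j\notin D(i)} K_j$ as the Mercer kernel $(X X\tpose - X_i X_i\tpose + \mb 1_{n\times n})^{\circ r}$ over the remaining $p-1$ inputs. The paper dispatches the latter step in one sentence (``through simple expansion it is quite easy to see''), whereas you spell out why the multinomial weights $\norm{A(j)}$ are unchanged when an absent input is dropped; this is a welcome clarification but not a different argument.
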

\begin{proof}
From the linear relationship \eqref{eq:complementarity} between the optimal coefficients $w^\star$ and dual variable $\alpha$ it follows immediately that $\norm{w_j}^2 = \gamma^2 \cdot \alpha^{\star\top} (\sum_{j\in D(i)}K_j) \,\alpha^\star$. Through simple expansion it is quite easy to see that $(X X\tpose - X_i X_i\tpose + \mb 1_{n\times n})^{\circ r}$ coincides exactly with $\sum_{j \notin D(i)} K_j$. Hence, the kernel matrix $K_i \defn \sum_{j\in D(i)}K_j$ is found as its complement $K - (X X\tpose - X_i X_i\tpose + \mb 1_{n\times n})^{\circ r}$. \qed
\end{proof}

Hence, despite the fact that the size of the optimal coefficients $w_j$ in front of the monomials depending on a certain input $i$ consists of the sum of squares of as many as $\binom{p+r-1}{r-1}$ components, it can nevertheless be computed without much effort. Unfortunately, the optimal regressors coefficients $w^\star$ in \eqref{eq:linear_regression:primal} are not expected to be sparse. Nevertheless, the optimal regressors coefficients can be used to provide a ranking of the importance of the $p$ data inputs. The Euclidean norm of the coefficients of the monomials which depend on input $i$ can indeed be used as a proxy for the relevance of the input of interest.  Fortunately, the quantities \eqref{eq:coefficients_size} are very efficient to compute once the optimal dual variable $\alpha^\star$ has been found. Indeed, each quantity can be computed in $\mc O(n^2)$ time independent of the dimension $f$ of the polynomials considered. 

The complete computation is given in Algorithm \ref{alg:variable_ranking}. Though not exact, it gives a good indication of the significance of each of the $p$ inputs. In fact it is very closely related the backward elimination wrapper methods discussed in \citep{guyon2003introduction}.

\begin{algorithm}
\SetKwInOut{Input}{input}
\SetKwInOut{Output}{output}
\caption{Input Ranking}
\label{alg:variable_ranking}
 \Input{$Y \in \Re^{n}$, $X \in \Re^{n\times p}$ and $r \in \N$}
 \Output{$r \in \Re^p$}
 $K \gets (X X\tpose + \mb 1_{n\times n})$ \\
 $\alpha^\star = (\eye{n} + \gamma K^{\circ r})^{-1} Y$ \\
 \For {$i$ in $[p]$ } {
 	$K_i \gets K^{\circ r} - (K-X_i X_i\tpose)^{\circ r}$ \\[0.1em]
 	$r_i \gets \gamma^2 \cdot  \alpha^{\star\top} K_i \alpha^\star$ \\
 }
\end{algorithm}

An alternative way of looking at our input ranking algorithm is through the subgradients of the convex regression loss function $c$ defined in the dual problem \eqref{eq:regression:dual}. We note that the subgradient of the function $c$ can be computed explicitly using its dual characterization given in \eqref{eq:regression:dual} as well.

\begin{proposition}[Derivatives of the optimal regression loss function]
\label{lemm:derivatives}
We have that the subgradient of the regression loss function $c$ as a function of the kernel $K$ can be stated as
\[
	\nabla c =  -\frac{\gamma}{2} \cdot \alpha^{\star \top} \nabla K \alpha^\star,
\]
where $\alpha^\star$ maximizes \eqref{eq:regression:dual}.
\end{proposition}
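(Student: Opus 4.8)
The plan is to view $c$ as a pointwise maximum of functions that are \emph{affine} in the kernel matrix $K$ and then apply an envelope (Danskin-type) argument. Write $c(K) = \max_{\alpha\in\Re^n}\phi(\alpha, K)$ with $\phi(\alpha, K)\defn -\frac{\gamma}{2}\,\alpha\tpose K\alpha - \frac12\,\alpha\tpose\alpha + Y\tpose\alpha$, which is exactly the objective of the dual problem \eqref{eq:regression:dual}. For each fixed $\alpha$ the map $K\mapsto\phi(\alpha, K)$ is affine, and its derivative along a perturbation direction $\nabla K$ is $-\frac{\gamma}{2}\,\alpha\tpose(\nabla K)\,\alpha$. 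Hence $c$, being a supremum of affine functions of $K$, is convex, and any maximizer $\alpha^\star$ of \eqref{eq:regression:dual} furnishes a subgradient of $c$ at $K$; evaluating that subgradient against $\nabla K$ produces precisely $-\frac{\gamma}{2}\,\alpha^{\star\top}(\nabla K)\,\alpha^\star$, which is the claimed identity.

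To see that this subgradient is in fact the unique one --- so that the statement may equally be read as a genuine gradient --- I would note that, since $K\in\S^n_+$ and $\gamma\geq 0$, the Hessian of $\phi(\cdot, K)$ equals $-(\eye{n}+\gamma K)\prec 0$; thus $\phi(\cdot, K)$ is strictly concave and its maximizer is the unique solution $\alpha^\star=(\eye{n}+\gamma K)^{-1}Y$ of the stationarity equation $(\eye{n}+\gamma K)\alpha^\star=Y$ already recorded just after \eqref{eq:regression:dual}. With a single active affine piece, the subdifferential of $c$ at $K$ is the singleton $\{-\frac{\gamma}{2}\,\alpha^\star\alpha^{\star\top}\}$. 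A shorter route I might present instead is to differentiate the identity $c(K)=\phi(\alpha^\star(K), K)$ by the chain rule, $\nabla_K c = (\partial_\alpha\phi)\,(\partial_K\alpha^\star)+\partial_K\phi$, and observe that the first term drops out because $\partial_\alpha\phi(\alpha^\star(K), K)=0$ at the optimum, leaving $\partial_K\phi(\alpha^\star, K)=-\frac{\gamma}{2}\,\alpha^\star\alpha^{\star\top}$.

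The argument is largely routine bookkeeping; the only points requiring care are (i) justifying the interchange of differentiation and maximization, which is immediate here from the affineness of $\phi$ in $K$ together with the fact that $\phi(\cdot, K)$ has bounded superlevel sets, or can simply be invoked as Danskin's theorem, and (ii) remembering that $\alpha^\star$ itself varies with $K$, so that a naive product rule would seem to generate an extra term which is in fact annihilated by first-order optimality. I do not anticipate any substantive obstacle beyond these.
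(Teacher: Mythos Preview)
Your proposal is correct and takes essentially the same approach as the paper: both argue that $c(K)$ is the pointwise maximum over $\alpha$ of functions affine in $K$, so the affine minorant attained at the maximizer $\alpha^\star$ is tight at $K$ and therefore supplies a subgradient. The paper's proof stops at exactly that observation; your additional remarks on strict concavity and uniqueness of $\alpha^\star$ (hence genuine differentiability) and the envelope/chain-rule alternative go beyond what the paper shows but are not required for the statement as written.
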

\begin{proof}
From the dual definition of the regression loss function $c$ it is clear that we have the inequality  
\begin{equation}
\label{eq:subgradient}
	c(\bar K) \geq -\frac{\gamma}{2} \cdot \alpha^{\star \top} \bar{K} \alpha^\star - \frac12 \alpha^{\star\top} \alpha^\star + Y\tpose \alpha^\star
\end{equation}
for all $\bar K$. From the very definition of $\alpha^\star$ as the maximizer of \eqref{eq:regression:dual} it follows that the previous inequality becomes tight for $\bar K = K$. This proves that the left hand side of \eqref{eq:subgradient} is a subgradient to the regression loss function $c$ at the point $K$. \qed
\end{proof} 

When comparing the previous proposition with the result in Theorem \ref{eq:coefficients}, it should be noted that the derivatives of $c$ at $K$ agree up to the constant $-2\gamma$ with the sum of squares of the coefficients $w^\star$ of the optimal polynomial. In essence thus, our Algorithm \ref{alg:variable_ranking} ranks the inputs according to the linearized loss of regression performance $\nabla c_i$ caused by ignoring the inputs using the polynomial regression \eqref{eq:abstract_regression}. The quantity $r_i$ characterized up to first-order the loss in predictive power when not using the input $i$. The higher this caused loss, the more importance is assigned to including input $i$ as a regressor. 

As was noted in the beginning of the section, the input ranking method presented in Algorithm \ref{alg:variable_ranking} does not aspire to find all $k$ relevant inputs exactly. Rather, it is only meant to eliminate the most unpromising inputs and keep $p'$ high potential candidates as illustrated in Figure \ref{fig:2-step-procedure}. Among those inputs which are deemed promising we will then solve the hierarchical $(k, \ell)$ sparse regression problem \eqref{eq:abstract_sparse_regression} exactly as explained in the subsequent section.

\section{A Cutting Plane Algorithm for Hierarchical Sparse Regression}
\label{sec:cutting_plane}

We point out again that our pure integer formulation \eqref{eq:opt:miop:kernel} of the hierarchical $(k, \ell)$-sparse regression problem \eqref{eq:abstract_sparse_regression} circumvents the introduction of a big-$\mc M$ constant  which is simultaneously hard to estimate and crucial for its numerical efficacy. Nevertheless, explicitly constructing the optimization problem \eqref{eq:opt:miop:kernel} results in the integer semidefinite optimization problem $$\min_{(s, h)\in \S^{f, p}_{\ell, k}} c\left(\textstyle\sum_{j\in[f]} s_j K_j\right)$$ which might prove daunting. The regression loss function $c$ is indeed a semidefinite representable function \citep{nesterov1994interior} to be optimized over the discrete set $\S^{f, p}_{\ell, k}$. Without even taking into account the discrete nature of the optimization problem \eqref{eq:opt:miop:kernel}, solving a \ac{sdo} of size the number of samples might even prove in a convex case impractical for medium size problems with $n \approx 1,000$. In order to solve our \ac{cio} formulation \eqref{eq:opt:miop:kernel}, we take here an alternative route using the outer approximation approach introduced by \citet{duran1986outer}. 

The outer approximation algorithm proceeds to find a solution to the \ac{cio} problem \eqref{eq:opt:miop:kernel} by constructing a sequence of piecewise affine approximations $c^a$ to the loss function $c$ based on cutting planes. From the pseudocode given in Algorithm \ref{alg:outer_approximation}, the outer approximation Algorithm \ref{alg:outer_approximation} can be seen to construct an increasingly better piece-wise affine lower approximation to the convex regression loss function $c$ using the subgradients defined in Proposition \ref{lemm:derivatives}. At each iteration indeed, the cutting plane added $\eta \geq c(s) + \nabla c (s^a) (s-s^a)$ in the outer approximation Algorithm \ref{alg:outer_approximation} cuts off the current binary solution $s^a$ unless it happened to be optimal in \eqref{eq:opt:miop:kernel}. As the algorithm progresses, the outer approximation function $c^a$ thus constructed
\[
  c^a(\textstyle\sum_{j\in [f]} s_j K_j) \defn \max_{u\in[a]} \, c(\textstyle\sum_{j\in[f]}s^u_{j} K_j) + \nabla c(\textstyle\sum_{j\in[f]}s^u_j K_j) (\textstyle\sum_{j\in[f]}(s-s^u_j) K_j)
\]
becomes an increasingly better approximation to our regression loss function of interest. Unless the current binary solution $s^a$ is optimal in \eqref{eq:opt:miop:kernel}, a new distinct cutting plane will refine the approximation. The main advantage of working with the approximations $c^a$ instead of $c$ is that the former results in linear integer optimization instead of the much more tedious semidefinite integer optimization problem over $\S^{f, p}_{\ell, k}$.

\begin{theorem}[Exact Sparse Regression \citep{fletcher1994solving}]
  Algorithm \ref{alg:outer_approximation} returns the exact sparse solution $w^\star$ of the hierarchical $(k, \ell)$-sparse regression problem \eqref{eq:bigm:primal} in finite time.
\end{theorem}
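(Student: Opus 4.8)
The strategy is to recognize Algorithm~\ref{alg:outer_approximation} as the outer approximation method of \citet{duran1986outer} and \citet{fletcher1994solving} applied to the \ac{cio} reformulation \eqref{eq:opt:miop:kernel} of \eqref{eq:bigm:primal}, and to verify the three properties that make that method exact and finite: the integer feasible set is finite, the objective is convex in the binary variables, and every cut generated is a valid global underestimator that moreover holds with equality at the point where it is produced.

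First I would rewrite \eqref{eq:opt:miop:kernel} in epigraph form as $\min\{\eta : \eta \ge c(\sum_{j\in[f]} s_j K_j),\ (s,h)\in\S^{f,p}_{\ell,k}\}$ and note that $\S^{f,p}_{\ell,k}\subseteq\{0,1\}^{f+p}$ is finite, so the master problem can return only finitely many distinct incumbents. Next I would verify convexity of $s\mapsto c(\sum_{j} s_j K_j)$: Theorem~\ref{thm:vapnik} writes $c(K)$ as a pointwise supremum over $\alpha\in\Re^n$ of maps affine in $K$, hence $c$ is convex on $\S^n_+$, while $s\mapsto\sum_j s_jK_j$ is linear; Proposition~\ref{lemm:derivatives} further supplies the explicit subgradient $\nabla c = -\tfrac{\gamma}{2}\,\alpha^{\star\top}\nabla K\,\alpha^\star$ at the maximizer $\alpha^\star$ of \eqref{eq:regression:dual}. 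The subgradient inequality $c(\bar K)\ge c(K) + \nabla c(K)(\bar K - K)$ therefore holds for every PSD $\bar K$, so each cut $\eta \ge c(\sum_j s^a_jK_j) + \nabla c(\sum_j s^a_jK_j)\big(\sum_j (s_j-s^a_j)K_j\big)$ added by Algorithm~\ref{alg:outer_approximation} underestimates the true objective everywhere and holds with equality at $s=s^a$. Consequently the piecewise-affine model $c^a$ obeys $c^a\le c$ on the feasible set, so the optimal value of each (linear, binary) master problem is a valid lower bound on the optimum of \eqref{eq:opt:miop:kernel}, while the least-squares value at any incumbent $(s^a,h^a)$ --- evaluated in closed form through Lemma~\ref{lemm:convexity}, and equal to $c(\sum_j s^a_j K_j)$ --- is a valid upper bound.

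The core of the argument is finite termination without cycling, which I would establish by contradiction. If the algorithm never stopped, finiteness of $\S^{f,p}_{\ell,k}$ would force a repeated incumbent $s^b = s^a$ with $a<b$. At iteration $b$ the master model already contains the cut generated at iteration $a$; evaluated at $s=s^b=s^a$ its gradient term vanishes and it reduces to $\eta \ge c(\sum_j s^a_jK_j)$. Hence the master optimum $\eta^b$ would satisfy $\eta^b \ge c(\sum_j s^a_jK_j)$, i.e.\ the lower bound at iteration $b$ would be no smaller than the least-squares value recorded as an upper bound at iteration $a$; since the master value is always a lower bound on the optimum of \eqref{eq:opt:miop:kernel}, the two bounds would coincide and the stopping test would fire --- contradicting non-termination. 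It follows that Algorithm~\ref{alg:outer_approximation} halts after at most $\abs{\S^{f,p}_{\ell,k}}$ iterations with an incumbent $(s^\star,h^\star)$ optimal for \eqref{eq:opt:miop:kernel}. Reconstructing $w^\star$ on the support $s^\star$ from the complementarity relation \eqref{eq:complementarity} --- equivalently, the explicit least-squares minimizer underlying Lemma~\ref{lemm:convexity} --- and setting $w^\star_j = 0$ whenever $s^\star_j = 0$, Theorem~\ref{thm:sparse_regression} then identifies this $w^\star$ as an optimal solution of \eqref{eq:bigm:primal}.

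I expect the finite-termination/no-cycling step to be the main obstacle. It is the only place where the argument is more than bookkeeping: one must check that the incumbent and bound updates are arranged so that revisiting a binary point necessarily collapses the optimality gap, which itself hinges on the cuts being \emph{exact} --- not merely valid --- at their point of generation, a property available here only because the inner least-squares subproblems are solved to optimality via the closed form of Lemma~\ref{lemm:convexity}. By comparison, the finiteness of $\S^{f,p}_{\ell,k}$ and the convexity of $c$ follow directly from the definitions and Theorem~\ref{thm:vapnik}, and the passage from $(s^\star,h^\star)$ back to the optimal $w^\star$ of \eqref{eq:bigm:primal} is routine.
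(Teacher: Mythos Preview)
Your proposal is correct and follows precisely the standard outer approximation convergence argument that the paper invokes. The paper itself does not spell out a formal proof; it states the theorem with a citation to \citet{fletcher1994solving} and relies on the informal discussion immediately preceding it --- namely that each cut is a valid affine minorant of $c$ which is tight at the incumbent $s^a$ and therefore excludes $s^a$ from future master solutions unless it was already optimal, so that finiteness of $\S^{f,p}_{\ell,k}$ forces termination. Your write-up formalizes exactly this reasoning (finite binary set, convexity of $c$ via Theorem~\ref{thm:vapnik}, exactness of cuts at their generation point via Proposition~\ref{lemm:derivatives}, no-cycling by contradiction), and then closes the loop back to $w^\star$ through Theorem~\ref{thm:sparse_regression} and Lemma~\ref{lemm:convexity}; there is no substantive difference in approach, only in the level of detail you supply.
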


\begin{algorithm}
\SetKwInOut{Input}{input}
\SetKwInOut{Output}{output}
\caption{The outer approximation process}
\label{alg:outer_approximation}
 \Input{$Y \in \Re^{n}$, $X \in \Re^{n\times p}$ and $k \in [1, p]$}
 \Output{$s^\star \in \S^p_k$ and $w^\star \in \Re^p$}
 $s_1 \gets$ warm start \\ $\eta_1 \gets 0$ \\ $a \gets 1$ \\
 \While{$\eta_a < c(s_a)$}{
   $ s_{a+1}, ~\eta_{a+1} \gets \arg \min_{s, \, \eta} \, \{ \, \eta \in \Re_+ ~\mathrm{s.t.} ~(s, h) \in \S^{f, p}_{\ell, k},  ~~\eta \geq c(s^u) + \nabla c(s^u) (s-s^u), ~ \forall u \in [a] \}$ \\
   $a \gets a+1$
  }
 $s^\star \gets s^{a}$ \\
 $w^\star \gets 0$, \quad $w^\star_{s^\star} \gets \left(\eye{p}/\gamma + X_{s^\star}\tpose X_{s^\star} \right)^{-1} X_{s^\star}\tpose Y$
\end{algorithm}

Despite the previous encouraging corollary, it nevertheless remains the case that from a theoretical point of view we may need to compute exponentially many cutting planes in the worst-case, thus potentially rendering our approach impractical. Indeed, in the worst-case Algorithm \ref{alg:outer_approximation} considers all integer point in $\S^{f, p}_{\ell, k}$ forcing us to minimize the function so constructed
\begin{equation*}
  \bar{c}(s) \defn \max_{(\bar s, \bar h)\in \mc \S^{f, p}_{\ell, k}} ~c(\bar s) + \nabla c (\bar s) (s-\bar s)
\end{equation*}
over the hierarchical binary constraint set $\S^{f, p}_{\ell, k}$. As the number of integer points in the constraint set $\S^{f, p}_{\ell, k}$ is potentially extremely large, the previous full explicit construction should evidently be avoided. In practice usually very few cutting planes need to be considered making the outer approximation method an efficient approach.

Furthermore, at each of the iterations in Algorithm \ref{alg:outer_approximation} we need to solve the convex integer optimization problem $\min \{ c^a(\textstyle\sum_{j\in[f]} s_j K_j) : (s, h)\in \S^{f, p}_{\ell, k} \}$. This can be done by constructing a branch-and-bound tree, c.f. \cite{lawler1966branch}, which itself requires a potential exponential number of its leaves to be explored. This complexity behavior is however to be expected as exact sparse regression is known to be an NP-hard problem. Surprisingly, the empirical timing results presented in Section \ref{sec:empirical_results} suggest that the situation is much more interesting than what complexity theory might suggest. In what remains of this section, we briefly discuss a technique to carry out the outer approximation algorithm more efficiently than a naive implementation would.

In general, outer approximation methods such as Algorithm \ref{alg:outer_approximation} are known as \textit{multi-tree} methods because every time a cutting plane is added, a slightly different integer optimization problem is to be solved anew by constructing a branch-and-bound tree. Consecutive integer optimization problems $\min \{ c^a(\textstyle\sum_{j\in[f]} s_j K_j) : (s, h)\in \S^{f, p}_{\ell, k} \}$ in Algorithm \ref{alg:outer_approximation} differ only in one additional cutting plane. Over the course of our iterative cutting plane algorithm, a naive implementation would require that multiple branch and bound trees are built in order to solve the successive integer optimization problems. We implement a \textit{single tree} way of solving the iteration algorithm \ref{alg:outer_approximation} by using dynamic constraint generation, known in the optimization literature as either a lazy constraint or column generation method. Lazy constraint formulations described in \cite{barnhart1998branch} dynamically add cutting planes to the model whenever a binary feasible solution is found. This saves the rework of rebuilding a new branch-and-bound tree every time a new binary  solution is found in Algorithm \ref{alg:outer_approximation}. Lazy constraint callbacks are a relatively new type of callback. To date, the only commercial solvers which provide lazy constraint callback functionality are \texttt{CPLEX}, \texttt{Gurobi} and \texttt{GLPK}.

\section{Numerical results}
\label{sec:empirical_results}

To evaluate the effectiveness of hierarchical sparse polynomial regression discussed in this paper, we report its performance first on synthetic sparse data and subsequently on real data from the \texttt{UCI Machine Learning Repository} as well. All algorithms in this document are implemented in \texttt{Julia} and executed on a standard \texttt{Intel(R) Xeon(R) CPU E5-2690 @ 2.90GHz} running \texttt{CentOS release 6.7}. All optimization was done with the help of the commercial mathematical optimization distribution \texttt{Gurobi version 6.5} interfaced through the \texttt{JuMP} package developed by \citet{LubinDunningIJOC}.

\subsection{Benchmarks and Data}
\label{ssec:algorithms}

In the first part we will describe the performance of our cutting plane algorithm for polynomial sparse regression on synthetic data. We first describe the properties of the synthetic data in more detail.

\paragraph{Synthetic data: }The synthetic observations $Y$ and input data $X$ satisfy the linear relationship
\begin{align*}
  Y & = g_{\mathrm{true}}(X) + E\\
   & = m(X)\cdot w_{\mathrm{true}} + E.
\end{align*}
The unobserved true regressor $w_{\mathrm{true}}$ has exactly $\ell$ nonzero components at indices $j$ selected uniformly at random without replacement from $\mc J$. The previous subset $\mc J$ is itself furthermore constructed randomly as $\cup_{i \in \mc I} D(i)$ where the $k$ elements in $\mc I$ are uniformly selected out of $[p]$. The previous discussed construction thus guarantees that the ground truth $w_{\mathrm{true}}$ is $(k, \ell)$ sparse. Additionally, the nonzero coefficients in $w_{\mathrm{true}}$ are drawn uniformly at random from the set $\{-1, +1\}$. The observation $Y$ consists of the signal $S \defn X w_{\mathrm{true}}$ corrupted by the noise vector $E$. The noise components $e_t$ for $t$ in $[n]$ are drawn \ac{iid} from a normal distribution and scaled such that the signal-to-noise ratio equals $$\sqrt{\mathrm{SNR}} \defn \norm{S}_2 / \norm{E}_2.$$ Evidently as the signal-to-noise ratio $\mathrm{SNR}$ increases, recovery of the unobserved true regressor $w_{\mathrm{true}}$ from the noisy observations can be done with higher precision.
We have yet to specify how the input matrix $X$ is chosen. We assume here that the input data samples $X = (x_1, \dots, x_n)$ are drawn from an \ac{iid} source with Gaussian distribution. Although the columns of the data matrix $X$ are left uncorrelated, the features $m_j(X)$ will be correlated. For instance, it is clear that the second and forth power of the first input can only be positively correlated.

We will compare the performance of hierarchical sparse regression with two other benchmark regression approaches. These two approaches where chosen as to investigate the impact of both sparsity and nonlinearity on the performance of our method. Next we describe the particularities of these two benchmarks more closely.

\paragraph{Polynomial Kernel Regression:} As a first benchmark we consider polynomial regression defined explicitly in \eqref{eq:abstract_regression}. The primary advantage of this formulation stems from the fact that the optimal polynomial regressor in \eqref{eq:abstract_regression} can be found efficiently using
\[
  g_2^\star(x) = \gamma \textstyle \sum_{t\in[n]} \alpha^\star_t (x\tpose x_t+1)^r
\]
where $\alpha^\star$ is the maximizer of \eqref{eq:regression:dual}. As this formulation does not yield sparse regressors, this benchmark will show us the merit of sparsity in terms of prediction performance. Classical Ridge regression is found as a special case for $r=1$, enabling us to see what fruits nonlinearity brings us.

\paragraph{$\ell_1$-Heuristic Regression:} In order to determine the effect of exact sparse regression in our two step procedure, we will also use a close variant of the \texttt{SPORE} algorithm developed by \citet{huang2010predicting} as a benchmark. Using the input ranking method discussed in Section \ref{sec:ranking}, we determine first the $p'$ most relevant inputs heuristically. Using the remaining inputs $X' \in \Re^{n\times p'}$ and response data $Y\in\Re^n$ we then consider the maximizer $g_1^\star$ of \eqref{eq:abstract_l1_regression} as a heuristic sparse regressor. This two-step regression procedure hence shares the structure outlined in Figure \ref{fig:2-step-procedure} with our hierarchical exact sparse regression algorithm. As to have a comparable number of hyper parameters as our two-step approach, we finally perform Ridge regression on the thus selected features using a Tikhonov regularization parameter $\gamma$ selected using cross validation.

Theoretical considerations \citep{buhlmann2011statistics, hastie2015statistical, wainwright2009sharp} and empirical evidence \citep{donoho2006breakdown}  suggests that the ability to recover the support of the correct regressor $w_{\mathrm true}$ from noisy data using the \texttt{Lasso} heuristic experiences a phase transition. While it is theoretically understood \citep{gamarnik2017high} that a similar phase transition must occur in case of exact sparse regression, due to a lack of scalable algorithms such a transition was never empirically reported. The scalable cutting plane algorithm developed in Section \ref{sec:cutting_plane} offers  us the means to do so however. Our main observation is that exact regression is significantly better than convex heuristics such as \texttt{Lasso} in discovering all true relevant features ($A\%\approx 100$), while truly outperforming their ability to reject the obfuscating ones ($F\%\approx 0$).

\subsection{Phase Transitions}

\begin{figure}
\begin{center}
    \includegraphics[width=0.85\textwidth]{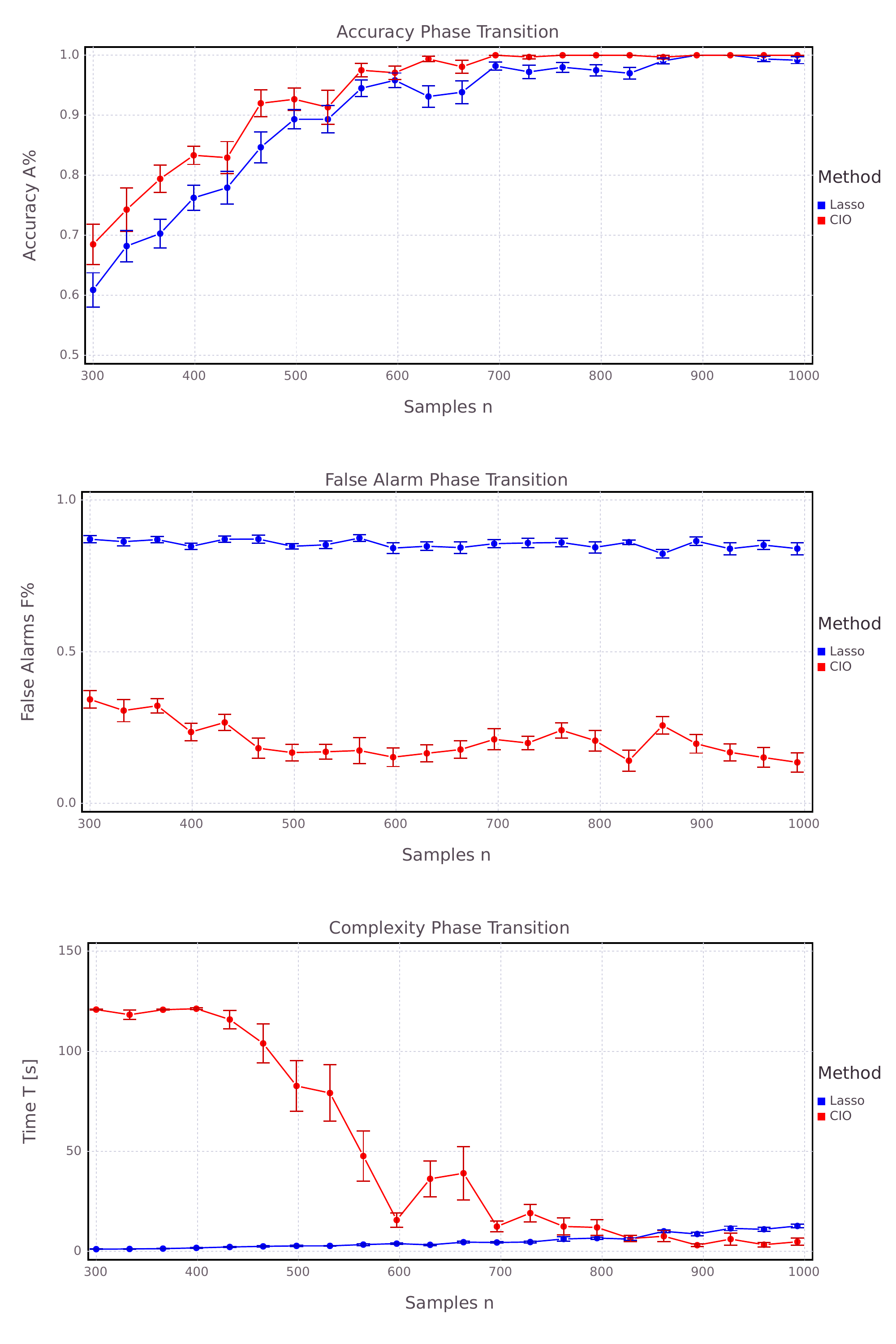}
\end{center}
\caption{The performance of exact sparse regression and the \texttt{Lasso} heuristic on synthetic data in terms of  accuracy $A\%$, false alarm rate $F\%$ and time $T$ in seconds.}
\label{fig:phase-transition}
\end{figure}

In Figure \ref{fig:phase-transition} we depict the performance of the hierarchical sparse regression procedure outlined in Figure \ref{fig:2-step-procedure} in its ability to discover all relevant regression features ($A\%$) and the running time $T$ in seconds as a function of the sample size $n$ for a regression problem with $p'=p=25$ inputs expanded with the help of all cubic monomials into $f=3276$ possible features. As $p'=p$ the input ranking heuristic is irrelevant here and instead all inputs are considered by the exact sparse regression procedure. The reported results are the average of $20$ independent sparse synthetic data sets where the error bars vizualize the inter data set variation. For the purpose of this section, we assume that we know that only $\ell=20$ features are relevant but do not know which. In practice though, the parameter $\ell$ must be estimated from data. In order to play into the ballpark of the \texttt{Lasso} method, no hierarchical structure $(k=p)$ is imposed. This synthetic data is furthermore lightly corrupted by Gaussian noise with $\sqrt{SNR}=20$. Furthermore, if the optimal sparse regressor was not found by the outer approximation Algorithm \ref{alg:outer_approximation} within two minutes, the best solution found up to that point is considered.

It is clear that our ability to uncover all relevant features ($A\%\approx 100)$ experiences a phase transition at around $n_0 \approx 600$ data samples. That is, when given more than $n_0$ data points, the accuracy of the sparse regression method is perfect. With fewer data points our ability to discover the relevant features quickly diminishes. For comparison, we also give the accuracy performance of the \texttt{Lasso} heuristic described in \eqref{eq:abstract_l1_regression}. It is clear that exact sparse regression dominates the \texttt{Lasso} heuristic and needs fewer samples for the same accuracy $A\%$.

Especially surprising is that the time $T$ it takes Algorithm \ref{alg:outer_approximation} to solve the corresponding $(k, \ell)$-sparse problems exactly experiences the same phase transition as well. That is, when given more than $n_0$ data points, the accuracy of the sparse regression method is not only perfect but easy to obtain. In fact, in case $n>n_0$ our method is empirically as fast as the \texttt{Lasso} based heuristic. This complexity transition can be characterized equivalently in terms of the number of cutting planes necessary for our outer approximation Algorithm \ref{alg:outer_approximation} to return the optimal hierarchical sparse regressor. While potentially exponentially many ($|{S^{f, p}_{\ell, k}}|$ in fact) cutting planes might be necessary in the worst-case, Table \ref{tab:cuts} list the actual average number of cutting planes considered on the twenty instances previously discussed in this section. When $n>n_0$ only a few cutting planes suffice, whereas for $n<n_0$ an exponential number seem to be necessary.

\begin{table}
  \centering
  \begin{tabular}{|lcccccccc|}
    \hline
    Samples $n$ & $300$ & $400$ & $500$ & $600$ & $700$ & $800$ & $900$ & $1000$ \\
    Cutting planes  & $>300$ & $>300$ & $298$ & $200$ & $70$ & $59$ & $25$ & $31$ \\
    \hline
  \end{tabular}
  \caption{Number of cutting planes considered in the outer approximation Algorithm \ref{alg:outer_approximation} as a function of the sample size $n$. For the smallest sample sizes $n$ the optimal solution could not be computed within the allocated maximum solution time.}
  \label{tab:cuts}
\end{table}

\subsection{The whole truth, and nothing but the truth}

In the previous section we demonstrated that exact sparse regression is marginally better in discovering all relevant features $A\%$. Nevertheless, the true advantage of exact sparse regression in comparison to heuristics such as \texttt{Lasso} is found to lie elsewhere. Indeed, both our method and the \texttt{Lasso} heuristic are with a sufficient amount of data eventually able to discover all relevant ($A\%\approx 100$) features.
In this section, we will investigate the ability of both methods to reject irrelevant features. Indeed in order for a method to tell the truth, it must not only tell the whole truth ($A\%\approx 100$), but nothing but the truth ($F\%\approx 0$). It is in the latter aspect that exact sparse regression truly shines.

\begin{figure}
\begin{center}
    \includegraphics[width=0.65\textwidth]{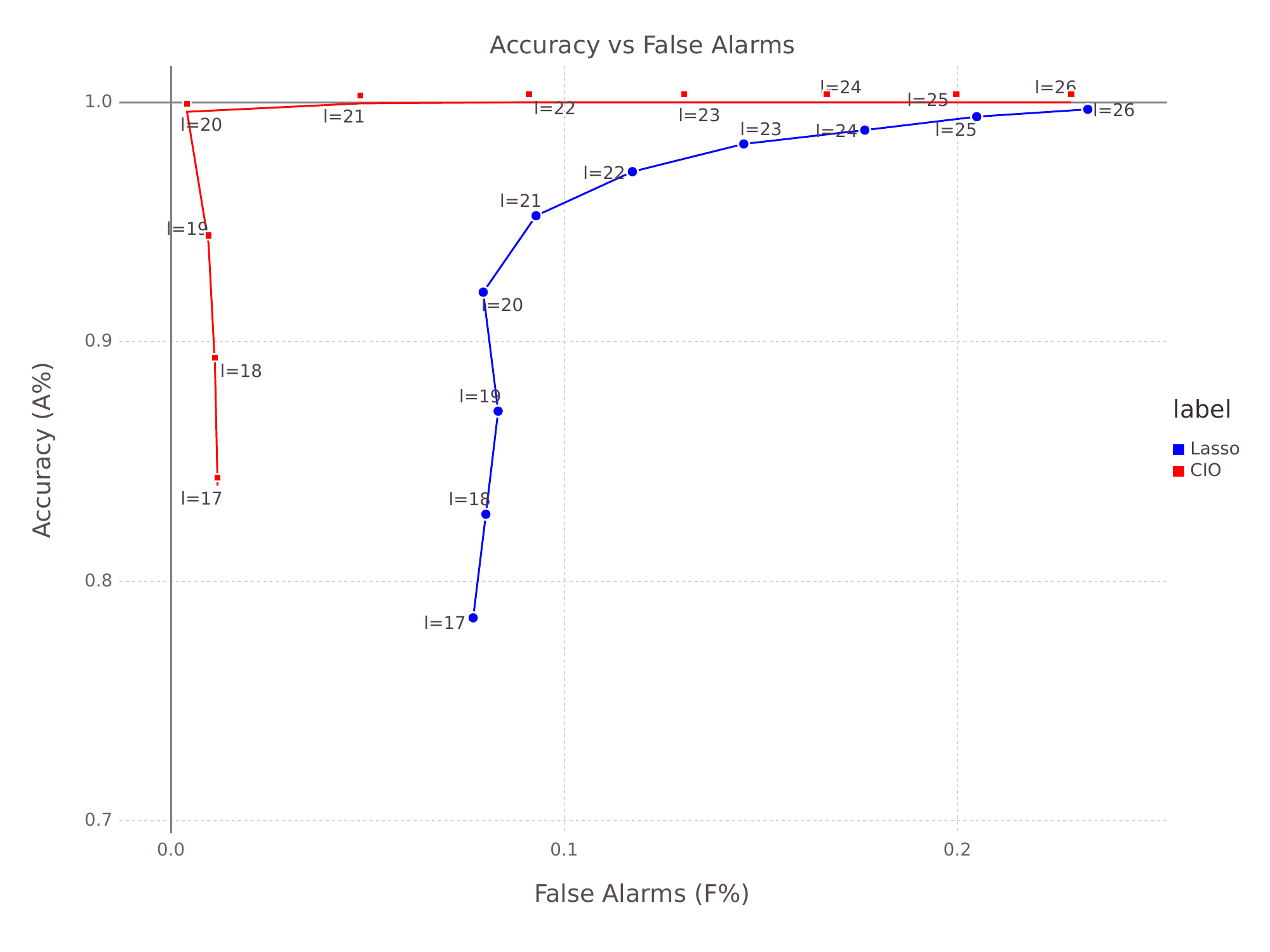}
\end{center}
\caption{The performance of exact sparse regression and the \texttt{Lasso} heuristic on synthetic data in terms of  accuracy $A\%$ and false alarms $F\%$.}
\label{fig:path}
\end{figure}

In Figure \ref{fig:path} we show the performance of the $(p, \ell)$-sparse regressors found with our exact method and the \texttt{Lasso} heuristic on the same synthetic data discussed in the previous section in terms of both their accuracy $A\%$ and false alarm rate $F\%$ in function of $\ell$. Again, the reported results are averages over 20 distinct synthetic data sets . Each of these data sets consisted of $n = 560$ observations. Among all potential third degree monomial features, again only $20$ where chosen to be relevant for explaining the observed data $Y$. Whereas in previous section, the true number of features was treated as a given, in practical problems $\ell$ must also be estimated from data. As we vary $\ell$ over the regression path $[f]$, we implicitly trade lower false alarm rates for higher accuracy. We have indeed a choice between including too many features in our model resulting in a high false alarm rate but hopefully discovering many relevant features, or limiting the number of features thus keeping the false alarm rate low but at the cost of missing features. One method is better than another when it makes this tradeoff better, i.e., obtains higher accuracy for a given false alarm rate or conversely a lower false alarm rate for the same accuracy. It is clear from the results shown in Figure \ref{fig:path} that exact sparse regression dominates the \texttt{Lasso} heuristic in terms of keeping a smaller false alarm rate while at the same time discovering more relevant monomial features.

Hence although both exact sparse regression and the \texttt{Lasso} heuristic are eventually capable to find all relevant monomial features, only the exact method finds a truly sparse regressor by  rejecting most irrelevant monomials from the obfuscating bulk. In practical situations where interpretability of the resulting regression model is key, the ability to reject irrelavant features can be a game changer. While all the results so far are demonstrated on synthetic data, we shall argue in the next section that also for real data sets similar encouraging conclusions can be drawn.

\subsection{Polynomial Input Ranking}

In the preceding discussions, the first step of our approach outlined in Figure \ref{fig:2-step-procedure} did not come into play as it was assumed that $p'=p$. Hence, no preselection of the inputs using the ranking heuristic took place. Because of the exponential number of regression features $f$ as a function of the input dimension $p$, such a direct approach may not be tractable when $p$ becomes large. In this part we will argue that the performance of the input ranking heuristic discussed in Section \ref{sec:ranking} is sufficient in identifying the relevant features while ignoring the obfuscating bulk.

To make our case, we consider synthetic data with hierarchical sparsity $(k, \ell)=(20, 40)$. That is, only 20 inputs in 40 relevant degree $r=3$ monomial features are relevant for the purpose of regression. In Table \ref{tab:heuristic} we report the average reduced input dimension $p'$ necessary for the input ranking  heuristic
to recover all relevant inputs. That is, all relevant $k$ inputs are among the top $p'$ ranked inputs.  For  example, for  $n=2,000$ and $p=1,000$ the input ranking heuristic  needs  to include $p'=286$  to cover all 20 true features, while for $n=10,000$ and $p=1,000$ the input ranking heuristic  needs  to include only $p'=78$. We note that as $n$ increases 
the  input ranking heuristic needs a smaller number of features  to identify the relevant ones. Note, however, that the false alarm rate 
of the input ranking heuristic remains high, that is, among the top $p'$ inputs many inputs were in fact irrelevant ($p'>k$). However, we do not aspire here to find all $k$ relevant inputs exactly, rather we hope to reduce the dimension to $p'$ without missing out any relevant inputs. Reducing the false alarm rate is done by means of exact hierarchical sparse regression in the second step of our overall algorithm.

\begin{table}
  \centering
  \begin{tabular}{|l|ccccc|}
    \hline
    Dimension $p'$ & $n=2\e{3}$ & $n=4\e{3}$ & $n=6\e{3}$ & $n=8\e{3}$ & $n=10\e{3}$\\
    \hline
    $p=200$ &72&41&36&54&27\\
    $p=400$ &117&109&75&80&62\\
    $p=600$ &222&166&128&68&104\\
    $p=800$ &361&230&96&95&102\\
    $p=1000$ &286&264&142&103&78\\
    \hline
  \end{tabular}
  \caption{Average size $p'$ necessary for our input ranking heuristic to identify all relevant features.}
  \label{tab:heuristic}
\end{table}

\subsection{Real Data Sets}
\label{ssec:real_data}
In the final part of the paper we report the results of the presented methods on several data sets found in the \texttt{UCI Machine Learning Repository} found under \texttt{https://archive.ics.uci.edu/ml/datasets.html}. Each of the datasets was folded ten times into $80\%$ training data, $10\%$ validation data and $10\%$ test data $\mc T$. No preprocessing was performed on the data besides rescaling the inputs to have a unit norm. 

We report the prediction performance of four regression methods on each of the test data sets. The first regression method we consider is ordinary Ridge regression. Ridge regression allows us to find out whether considering nonlinear regressors has merit. The second method we consider is polynomial kernel regression with degree $r > 1$ polynomials as in \eqref{eq:abstract_regression}. This nonlinear but non-sparse method on its part will allow us to find out the merits of sparsity for the purpose of out-of-sample prediction performance. The third method is the $\ell_1$-heuristic described before and which will allow us to see whether exact sparse formulations bring any benefits. We used the input ranking algorithm described in Section \ref{sec:ranking} to limit the number of potentially relevant inputs to at most $p'=20$. Each of these methods is compared to our hierarchical exact regressor in terms of out-of-sample test error
\[
  \mathrm{TE} \defn \frac{\sum_{t\in \mc T}\norm{y_t-h(x_t)}_2}{\sqrt{\abs{\mc T}}}.
\]
The hyperparameters of each method were chosen as those best performing on the validation data from among $k \in [p']$, $\ell \in [100]$ and polynomial degree ranging in $r\in [4]$. The average out-of-sample performance on the test data and sparsity of the obtained regressors using the \texttt{Lasso} heuristic and exact hierarchical sparse regression is shown in Table \ref{table:datasets}.

\begin{table}
  \centering
  \begin{tabular}{|lcc|c|cc|ccc|cccc|}
    \hline
  \multicolumn{3}{|c|}{} & RR & \multicolumn{2}{c|}{SVM} & \multicolumn{3}{c|}{PL} & \multicolumn{4}{c|}{CIO}\\
  \hline
  Problem & $n$ & $p$   & TE & $r^\star$   &  TE  &  $r^\star$ & $\ell^\star$ & TE & $r^\star$ & $k^\star$ & $\ell^\star$ & TE \\
  \hline
  \citeauthor{brooks1989airfoil} & 1502 & 5 & 4.62 & 3 & 4.04 & 4 & 65 & 3.61 & 4 & 5 & 100 & \textbf{3.53} \\
  \citeauthor{yeh1998concrete} & 1029 & 8 & 10.2 & 3 & \textbf{6.00} & 4 & 94 & 6.11 & 3 & 8 & 100 & 6.27 \\
  \citeauthor{tsanas2012energy} I& 768 & 8 & 2.79 & 4 & 2.40 & 4 & 49 & 2.52 & 4 & 8 & 100 & \textbf{2.37} \\
  \citeauthor{tsanas2012energy} II& 768 & 8 & 3.12 & 4 & \textbf{1.72} & 4 & 50 & 2.97 & 4 & 6 & 70 & 2.79 \\
  \citeauthor{cortez2009wine} & 1599 & 11 & 0.65 & 2 & 0.70 & 4 & 70 & 0.69 & 3 & 7 & 95 & \textbf{0.63} \\
  \citeauthor{cortez2009wine} & 4898 & 11 & 0.74 & 2 & 0.72 & 3 & 95 & 0.70 & 3 & 11 & 95 & \textbf{0.70} \\
  \citeauthor{zhou2014music} I & 1058 & 68 & \textbf{16.4} & 2 & 20.3 & 2 & 30 & 16.5 & 4 & 18 & 65 & 18.5 \\
    \citeauthor{zhou2014music} II & 1058 & 68 & \textbf{44.0} & 2 & 50.4 & 2 & 37 & 44.8 & 2 & 20 & 25 & 45.8 \\
%    Monitoring & 11933 & 17 & 1.89\e{-3} & 4 & \textbf{7.95\e{-5}} & 3 & 19 & 6.56\e{-3} & 3 & 13 & 75 & 5.41\e{-3}\\
  \hline
  \end{tabular}
  \caption{Out-of-sample performance of Ridge regression (RR), polynomial kernel regression (SVM), polynomial lasso (PL) and exact hierarchical regression (CIO) on several UCI Data sets. We give the optimal hyper parameters for each of these methods as well.}
  \label{table:datasets}
\end{table}

As one could expect, considering nonlinear monomial features is not beneficial for prediction in all data sets. Indeed, in two data sets ordinary Ridge regression provides the best out-of-sample performance. A similar remark can be made with regards to sparsity. That is, for three data sets adding sparsity does not immediately yield any benefits in terms of prediction power. Nevertheless, in those situations were nonlinear and sparse regression is beneficial the results again point out that there is a benefit to exact sparse regression rather than heuristic approaches.

\section{Conclusions}

We discussed a scalable hierarchical sparse regression method based on a smart heuristic and modern integer optimization for nonlinear regression. We consider as the best regressor that degree $r$ polynomial of the input data which depends on at most $k$ inputs counting at most $\ell$ monomial terms which minimizes the sum of squares prediction errors with a Tikhonov loss. This hierarchical sparse specification aligns well with big data settings where many inputs are not relevant for prediction purposes and the functional complexity of the regressor needs to be controlled to avoid overfitting. Using a modern cutting plane algorithm, we can use exact sparse regression on regression problems of practical size. The ability of our method to identify all $k$ relevant inputs as well as all $\ell$ relevant monomial terms and reject all others was shown empirically to experience a phase transition. In the regime where our method is statistically powerful, the computational complexity of exact hierarchical regression was empirically on par with \texttt{Lasso} based heuristics taking away their main propelling justification. We have empirically shown that we can outperform heuristic methods in both finding all relevant nonlinearities as well as rejecting obfuscating ones. 

\section*{Acknowledgements}

The second author is generously supported by the Early Postdoc.Mobility fellowship P2EZP2 165226 of the Swiss National Science Foundation.

\setlength\bibitemsep{10pt}
\printbibliography

\end{document}